\documentclass[12pt]{amsart}

\usepackage{fullpage}
\usepackage{mathtools}
\usepackage{amsmath,amsfonts,amssymb,amsthm}
\usepackage[mathscr]{eucal}
\usepackage{tikz-cd}
\usepackage{enumitem,kantlipsum}
\usepackage[normalem]{ulem}
\usepackage[numbers]{natbib}
\usepackage{caption}
\usepackage{subcaption}
\usepackage{tikz}
\usepackage{todonotes}

\usetikzlibrary{shapes,positioning,intersections,quotes}

\usepackage{datetime2}

\RequirePackage[colorlinks,citecolor=blue,urlcolor=blue,backref=page]{hyperref}

\usepackage[final]{showlabels} %to shows labels, final—turns off all the package's functionality
% Should go after hyperref

\makeatletter
\@namedef{subjclassname@2020}{\textup{2020} Mathematics Subject Classification}
\makeatother

\DeclarePairedDelimiterX\Basics[1](){ #1}

\newcommand{\sP}{\mathbb{P}}  % Probability
\newcommand{\sE}{\mathbb {E} } % Expectation

\newcommand{\E}{\mathbb {E} }

\newcommand{\BR}{\mathbb{R} }
\newcommand{\cP}{\mathcal{P}}

 %Covariance

%\newcommand{\Var}[1]{\mathsf{Var}\!\left(#1\right) }
 %convergence in distribution
\newcommand{\1}{\mathbf{1}}
\newcommand{\remove}[1]{}

\newtheorem{theorem}{Theorem}[section]

\newtheorem{lemma}[theorem]{Lemma}
\newtheorem{proposition}[theorem]{Proposition}
\newtheorem{remark}[theorem]{Remark}

\newtheorem{definition}[theorem]{Definition}

\newtheorem{example}[theorem]{ Example}

\numberwithin{equation}{section}

\newcommand{\no}{\nonumber}

\newcommand{\hN}{\hat{N}}

\newcommand{\lk}{\mathrm{lk}}
\newcommand{\tlk}{\tilde{\mathrm{lk}}}
\newcommand{\md}{\mathrm{d}}
\newcommand{\omu}{\overline{\mu}}
\newcommand{\umu}{\underline{\mu}}
\newcommand{\on}{\overline{n}}
\newcommand{\un}{\underline{n}}
\newcommand{\ER}{Erd\"{o}s-R\'enyi }
\newcommand{\tG}{\tilde{G}}
\newcommand{\mL}{\mathcal{L}}
\newcommand{\bU}{\bar{U}}

\begin{document}

\title{Random clique complex process inside the critical window.}

\date{\today}

\author{Agniva Roy} 
\address{School of Mathematics, Georgia Tech, Atlanta, USA. }
\email{aroy86@gatech.edu}

\author{D. Yogeshwaran}
\address{Theoretical Statistics and Mathematics Unit, Indian Statistical Institute, Bangalore.}
\email{d.yogesh@isibang.ac.in}

\keywords{random topology, hitting times, clique complexes, Poisson convergence, Betti numbers}

\subjclass[2020]{
60B99 %Probability theory on algebraic and topological structures,
55U10 % Simplicial sets and complexes in algebraic topology
}

%\tableofcontents

\begin{abstract}
We consider the  random clique complex process - the process of clique complexes induced by the complete graph with i.i.d. Uniform edge weights.  We investigate the evolution of the Betti numbers of the clique complex process in the critical window and in particular,  show a process-level convergence of the Betti numbers to a Poisson process. Our proof technique gives easily an hitting time result i.e, with high probability, the $k$th cohomology becomes trivial when there are no more isolated $k$-faces. Our results imply that the thresholds for vanishing of cohomology of the clique complex process coincides with that of the threshold for vanishing of `instantaneous' homology determined by \citet{SVT}. We also give a lower bound for the probability of clique complex process to have Kazhdan's property $(T)$.   These results show a different behaviour for the clique complex process compared to the \v{C}ech complex process investigated in the geometric setting by \citet{B19}.  
\end{abstract}

\maketitle

\section{Introduction}
\label{s:intro}

The success of \ER  random graph model \cite{FK} and need for higher-dimensional analogues of random graphs in topological data analysis have driven studies of various models of weighted random (simplicial) complexes \cite{BK21,survey}.  A crucial question here is determining the threshold for homological connectivity in random complexes.  In this article,  we investigate the behaviour of cohomology of the random clique complex process close to the threshold for vanishing of cohomology.  To the best of our knowledge, process level convergence of this model is not straightforward to deduce from existing results due to the lack of monotonicity in homology.  

\paragraph{\bf Organization of the paper.}  In the rest of the introduction,  we introduce the model,  state our main results and also place them in the context of existing literature.   In Section \ref{s:prelims},  we briefly recall some basic notions in combinatorial topology and state simple lemmas for later use.  We give proof of our main theorems in Section \ref{s:proofs}.  To keep our exposition brief,  we shall not describe the background and related literature in detail but provide pointers to the same; we refer the interested reader to the introduction in \citet{B19}.   

\paragraph{\bf Set-up.}  We first define the random graph process.  Let $U(i,j)$,  $1 \leq i < j \leq n$ be independent identically distributed uniform($[0,1]$) random variables.  We set $U(i,j) = U(j,i)$ for $i > j$.  Let $t \in [0,1]$.  Setting $E(n,t) := \{(i,j) : i \neq j,  U(i,j) \leq t \}$,   we define the graph $G(n,t) := ([n], E(n,t))$ where  $[n] := \{1,\ldots,n\}$.  The graph $G(n,t)$ has the same distribution as the well known \ER  random graph with parameters $n,t$.  For convenience,  we shall refer to $G(n,t)$ as the \ER  random graph.  

Associated to a graph $G$,  one can build a (simplicial) complex called the {\em clique complex} (also known as {\em Vietoris-Rips} complex) by considering $k$-cliques as $(k-1)$-faces of the (simplicial) complex.  We refer the reader to Section \ref{s:prelims} for more detailed definitions of various notions used here.   The {\em (\ER) random clique complex proces} $X(n,t), t \geq 0$ is the process of clique complexes associated to the random graph process $G(n,t)$.  

This random clique complex was introduced in \cite{Kahle1} and threshold for homological connectivity was investigated in \cite{Kahle1,SVT,Fowler2019}.   Denoting by $H^k(\cdot)$,  the cohomology group of a complex,  triviality of $H^k(X(n,t))$ is referred to as homological connectivity.  Triviality of $H^0(X(n,t))$ corresponds to graph connectivity of $G(n,t)$ and this has been studied in detail; see for example \cite{FK}.  The crucial difference between $k = 0$ and $k \geq 1$ is monotonicity.  If $H^0(X(n,t))$  is trivial (i.e., $G(n,t)$ is connected) then $H^0(X(n,s))$ is also trivial for all $s \geq t$.   However,  this is not true for $k \geq 1$ and naturally leads to the main question of this article.  Now onwards,  we shall assume that $k \geq 1$.  \\

\paragraph{\bf Main results.}  

The above discussion leads us to consider the random clique complex process $X(n,t),  t \in [0,1]$ and in particular,  the Betti number process $\beta_k(X(n,t)),  t \in [0,1]$ where $\beta_k$ is the rank of the cohomology group $H^k$ with rational coefficients.  Triviality of $H^k$ is equivalent to $\beta_k = 0$.  With this brief background,  our main question is the scaling and asymptotic distribution of the vanishing threshold $T_{n,k}$ when $H^{k}(X(n,t))$ becomes trivial.  More formally,   set
\begin{equation}
\label{d:tnk}
T_{n,k} := \inf \{ t :  \beta_k(X(n,s)) = 0 \,  \forall s \geq t \}.
\end{equation}
To relate $T_{n,k}$ to triviality of $H^k$,  note that 
$$\{T_{n,k} \geq t \} = \{ \beta_k(X(n,s)) \neq 0 \mbox{ for some $s \geq t$} \}.$$
Going further,  we shall investigate the asymptotics of the process $\beta_k(t) := \beta_k(X(n,t)),  t \in [0,1]$ in the critical window (i.e.,  close to its vanishing threshold).  We shall now state our theorem determining the scaling and asymptotic distribution of the process.  
\begin{theorem} 
\label{thm:main}
Let $k \geq 1$.  For $c \in \BR$,   we set $t_c := t_{c}(k,n) = \big(\frac{(\frac{k}{2}+1)\log n + \frac{k}{2} \log \log n +c}{n}\big)^{\frac{1}{k+1}}$.   Then we have that 
$$\{\beta_k(t_c)\}_{c \in \BR} \overset{d}{\to} \{\cP(c)\}_{c \in \BR},$$
where $\{\cP(c)\}_{c \in \BR}$ is a Poisson process on $\BR$ with intensity measure $\mu(k,x) \, \md x$, $\mu(k,x) := \frac{(k/2+1)^{k/2}}{(k+1)!}e^{-x}$ and $\overset{d}{\to}$ denotes weak convergence in the space $D[0,\infty)$ of right-continuous functions with left limits. 
\end{theorem}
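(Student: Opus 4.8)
The plan is to reduce the process-level statement to a Poisson approximation for the number of isolated $k$-faces, and then show that in the critical window the entire $k$-th cohomology is carried by these isolated faces. Concretely, for a $(k-1)$-face (equivalently a $k$-clique) $\sigma$ in $X(n,t_c)$, call $\sigma$ \emph{isolated} if it is not contained in any $k$-face of $X(n,t_c)$, i.e. no vertex outside $\sigma$ is joined to all $k+1$ vertices of $\sigma$ by edges of weight $\le t_c$. Write $W_k(c)$ for the number of isolated $k$-faces at time $t_c$. The first step is a first- and second-moment computation: the probability that a fixed $(k+1)$-set forms a $k$-clique is $t_c^{\binom{k+1}{2}}$, and the conditional probability that it stays isolated is $(1-t_c^{k+1})^{n-k-1}$; multiplying by $\binom{n}{k+1}$ and using $t_c^{k+1} = \frac{(k/2+1)\log n + (k/2)\log\log n + c}{n}$ one gets $\E W_k(c) \to \mu(k,c) := \frac{(k/2+1)^{k/2}}{(k+1)!} e^{-c}$ (this explains the intensity and the particular scaling $t_c$, the $\log\log n$ term exactly cancelling the $n^{k/2}$-type factor). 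Then a Stein–Chen / factorial-moment argument, using that distinct potential isolated faces are almost independent (they interact only through shared sub-faces, a negligible contribution), shows $W_k(c) \cvgdist \mathrm{Poisson}(\mu(k,c))$, and more generally that the finite-dimensional distributions of $c \mapsto W_k(c)$ converge to those of the claimed Poisson process $\cP$. Since $c \mapsto W_k(c)$ is non-increasing in $c$ (an isolated face can only cease to be isolated as $t$ grows) and the limit $\cP$ is a decreasing pure-jump process, fdd convergence plus monotonicity upgrades to convergence in $D$ by a standard argument (cf. the treatment in \citet{B19}).

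The second, topological step is to show $\beta_k(X(n,t_c)) = W_k(c)$ with high probability, uniformly for $c$ in compacts. The mechanism: at this density the clique complex is, with high probability, $k$-connected in the sense that the only source of $k$-cohomology is the "boundary" phenomenon of an isolated $(k-1)$-face (a $k$-clique with empty link), each of which contributes exactly one to $\beta_k$ and these contributions are independent in cohomology. Concretely I would invoke (or reprove, using Garland-type / Morse-theoretic arguments available in the combinatorial-topology preliminaries of Section \ref{s:prelims}) the statement that for $t$ slightly below $t_c$ the complex $X(n,t)$ already has $H^j = 0$ for $j < k$ and that removing the stars of the isolated $k$-faces leaves a complex with trivial $H^k$; this is exactly the "instantaneous homology" threshold of \citet{SVT}, and the point is that in the window $[t_{c}, t_{c'}]$ no \emph{new} cohomology can be created faster than old cohomology (from isolated faces) is destroyed, because any non-isolated-face cycle would already have had to vanish at a strictly smaller threshold. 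Establishing this "no new cohomology in the window" claim — i.e. controlling the process globally rather than at a single time, despite the lack of monotonicity of $\beta_k$ — is the main obstacle; the key idea is that a cocycle not supported on isolated faces has support of size growing with $n$, and a union bound over times (or a clever coupling) shows such cocycles are born and die at thresholds bounded away from the critical window with high probability.

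Finally, one assembles the pieces: fix a compact $[-M,M] \ni c$; with high probability $\beta_k(t_c) = W_k(c)$ for all such $c$ simultaneously (using monotonicity of $W_k$ and a union bound over a suitable net, plus the fact that $W_k$ changes only at finitely many values of $c$ whp), and $W_k(\cdot) \cvgdist \cP(\cdot)$ in $D$ from the first step; hence $\beta_k(t_\cdot) \cvgdist \cP$ in $D$. The hitting-time corollary $T_{n,k}$ then drops out because $T_{n,k} = \inf\{t_c : W_k(c) = 0\}$ whp, which is governed by the last jump of the limiting Poisson process. I expect the moment computations and the Poisson-process convergence (first step) to be essentially routine given \citet{SVT} and standard Poisson-approximation technology; the genuine work is the second step — pinning down the precise topological statement that isolated $k$-faces are the \emph{only} contributors to $\beta_k$ throughout the critical window and not merely at one instant.
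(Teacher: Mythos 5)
Your high-level strategy matches the paper: prove Poisson-process convergence for the count of isolated $k$-faces, then show that throughout the critical window $\beta_k$ coincides with that count, and assemble the pieces. The first moment computation, the form of $\mu(k,c)$, and the role of Garland's method are all correctly identified. However there are two problems, one of which is a concrete error.

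The concrete error: you assert that $c \mapsto W_k(c)$ (i.e. $N_k(t_c)$) is non-increasing, ``an isolated face can only cease to be isolated as $t$ grows.'' This is false, and the paper explicitly flags the non-monotonicity of $N_k(t)$ as a central difficulty. An isolated $k$-face is a maximal $(k+1)$-clique. As $t$ grows, new $(k+1)$-cliques are created (once all $\binom{k+1}{2}$ internal edges arrive), and such a clique is born isolated if at that instant no outside vertex has edges to all $k+1$ of its vertices; thus $N_k$ can jump up as well as down. Your use of monotonicity is load-bearing in two places: (a) upgrading fdd convergence of $W_k$ to $D$-convergence ``by a standard argument,'' and (b) the union bound over a net in $c$ to get simultaneous equality $\beta_k(t_c)=W_k(c)$. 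Both steps fail as written. The paper's workaround is to introduce the genuinely monotone quantity $N_k^*(t)$ (number of $(k+1)$-sets that are maximal cliques at \emph{some} $s\ge t$), show $\sE[N_k^*(t_c)-N_k(t_c)]\to 0$ (Proposition \ref{prop:van_isol_faces}), and then prove process convergence by treating the jump times of $N_k$ as a point process in $M_p(\BR)$ and checking expectations plus void probabilities via factorial moments, rather than by appealing to monotonicity.

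The second issue is that your step two, which you correctly identify as the crux, is left at the level of a plausibility argument (``a union bound over times, or a clever coupling''). The paper's Theorem \ref{thm:bknkapprox} is the genuine technical content: it bounds $\sP(\beta_k(t)\neq N_k(t))$ pointwise via Garland's method, the spectral-gap estimate of \cite{SPEC} (quantified more carefully than in \cite{SVT,Fowler2019}), connectivity/$m$-connectivity estimates for the random links, and the removal lemmas (Lemmas \ref{l:bettikX'X}, \ref{l:xtox'}); it then converts these pointwise bounds into a uniform-in-$t$ statement not by a naive union bound (which fails, $t$ being continuous) but by observing that $\beta_k$ and $N_k$ are pure jump processes, integrating the pointwise bounds over $t\in[t_c,1]$, establishing summability in $n$, and invoking Borel--Cantelli. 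Without that integration-plus-Borel--Cantelli device (or some equivalent), your sketch does not yield the needed ``simultaneously for all $t\ge t_c$'' conclusion.
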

The Poisson process $\cP(c)$ is a non-increasing pure jump process defined by two properties (i) $\cP(b) - \cP(a)$ has Poisson($\mu(k,a) - \mu(k,b)$) distribution and (ii) $\cP(a_{2j}) - \cP(a_{2j-1}), j = 1,\ldots,m$ are independent if $-\infty < a_1 < a_2 \ldots < a_{2m} < \infty$.

Asymptotic distribution of $\beta_k(X(n,t_c))$ for a fixed $c$ was shown in \cite[Theorem 1.3]{Fowler2019}. We feel that there is a gap in the arguments therein and our proof of the above theorem does fix the gap; see Remark \ref{rem:gap_approx}. Even so, it is not possible to deduce asymptotic distribution of $T_{n,k}$ from asymptotic distribution of $\beta_k(X(n,t_c))$. But we can easily conclude from Theorem \ref{thm:main} and non-increasing property of the Poisson process that
$$ \lim_{n \to \infty} \sP(T_{n,k} \leq t_c) =  \lim_{n \to \infty}  \sP\big(\bigcap_{t \geq t_c}(\beta_k(t) = 0) \big) = \sP\big(\cP(c) = 0 \big) = e^{-\int_{c}^{\infty} \mu(k,x) \md x} = e^{-\mu(k,c)},$$
i.e.,  $nT_{n,k}^{k+1} - (\frac{k}{2}+1)\log n - \frac{k}{2} \log \log n$ converges in distribution to a Gumbel distribution with parameter $\mu(k,c)$.  This also immediately implies that there are no 'exceptional times'  for triviality of $H^k(X(n,t))$ i.e.,  for $k\geq 1,$ and a sequence $w(n)$ such that $w(n) \to \infty$, it holds that
\begin{equation}
\label{e:vancohom}
 \sP(\bigcap_{t \geq t_n}H^{k}(X(n,t))=0) \to 1,  \,  \,  \mbox{for} \,  \,  t_n :=  \bigg(\frac{(\frac{k}{2}+1)\log n + \frac{k}{2} \log \log n + w(n)}{n}\bigg)^{\frac{1}{k+1}}.
 \end{equation}
As an immediate corollary, we recover \cite[Theorem 1.1]{SVT} - for $k,  w(n),  t_n$ as above,  we have that
 $$ \sP(H^{k}(X(n,t))=0) \to 1,  \,  \,  \mbox{if} \,  \,  t \geq t_n.$$
Thus the thresholds for vanishing of `instantaneous cohomology' and vanishing of cohomology coincide. This is not the case for the random geometric \v{C}ech Complex (see \cite[Theorem 3.1 and Corollary 7.14]{B19}).   Also,  part of our proof involves showing that isolated faces also exhibit similar threshold behaviour  (Proposition \ref{prop:van_isol_faces}) and such a phenomenon has been shown to hold true for $1$-faces in geometric clique complexes as well  \cite[Proposition 1.4 and 1.5]{IY20} but fails for $1$-faces in random \v{C}ech complexes \cite[Proposition 1.4 and 1.5]{IY20}.   

On the other hand like with connectivity in \ER random graph case,  the thresholds for vanishing of `instantaneous cohomology' and vanishing of cohomology coincide trivially for the random $k$-complex (see \cite{LM,MW}) due to monotonicity.  Process-level convergence inside the critical window was shown for this model in \cite[Theorem 7]{STY20} building upon the marginal distribution convergence proven in \cite[Theorem 1.10]{critwind}.   

Another easy consequence of our proof technique is that apart from process level convergence, we also obtain a hitting time result. Even for random $k$-complexes such a result is known only in the case of $k = 1,2$ which were proven respectively in \cite[Theorem 4]{Bollobas1985} and \cite[Theorem 1.11]{critwind}. Hitting time results for the random \v{C}ech complex was shown in \cite{B19} (see discussion below (3.4) therein) with the result for $0$th homology of random \v{C}ech complex (or connectivity in the random geometric graph) proven in \cite{Pen97}. We now state such a result for all $k$ for random clique complexes. 

Let $N_k(t) := N_k(X(n,t))$ denote the number of isolated $k$-faces in $X(n,t)$ which is same as the number of maximal $(k+1)$-cliques in $G(n,t)$. That this approximates $\beta_k(t)$ very well for many values of $t$ has been the driving force behind the results in random clique complexes \cite{SVT,Fowler2019} including ours. Also note that $N_k(t)$ is not monotonic. Analogous to vanishing threshold $T_{k,n}$  
for $\beta_k$, we may define $T'_{n,k}$ as the vanishing threshold for $N_k(t)$ i.e.,
\begin{equation}
\label{d:tnkiso}
T'_{n,k} := \inf \{ t :  N_k(X(n,s)) = 0 \,  \forall s \geq t \}.
\end{equation}
Informally, the hitting time result shows that the $k$th cohomology group of the random clique complex process vanishes when the last isolated $k$-face disappears. We state it formally now and after the proof mention a strengthening of the same.
\begin{theorem}
\label{thm:hitting}
Let $T_{n,k},T'_{n,k}$ be the vanishing thresholds of $k$th Betti number and isolated $k$-faces. Then we have that as $n \to \infty$, 
$$ \sP(T_{n,k} = T'_{n,k}) \to 1.$$
\end{theorem}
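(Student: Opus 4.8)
The plan is to sandwich both hitting times between the same pair of deterministic thresholds and exploit the structural relation between isolated $k$-faces and $k$-cohomology. First I would recall the elementary containment $N_k(t) \le \beta_k(t)$ that underlies the clique-complex literature: an isolated $k$-face (equivalently, a maximal $(k+1)$-clique, all of whose $k$-subfaces lie in no other $(k+1)$-clique) cannot be a coboundary, so each isolated $k$-face contributes at least one to $\beta_k(t)$. Consequently $N_k(s) = 0$ whenever $\beta_k(s) = 0$, which gives immediately $T'_{n,k} \le T_{n,k}$ deterministically. Hence it suffices to prove $\sP(T_{n,k} \le T'_{n,k}) \to 1$, i.e. that with high probability $\beta_k(s) = 0$ for every $s$ at which $N_k(s) = 0$ and beyond.

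The second step is to pin down the common critical window. From Proposition \ref{prop:van_isol_faces} (the analogue for isolated faces) I would extract that $T'_{n,k}$ lies, with high probability, in the window around $t_c$: more precisely, for any $w(n) \to \infty$, writing $t_n^{\pm}$ for the thresholds with $c$ replaced by $\pm w(n)$ in the definition of $t_c$, one has $\sP(t_n^- \le T'_{n,k} \le t_n^+) \to 1$. The same concentration holds for $T_{n,k}$ by the Gumbel limit derived in the excerpt from Theorem \ref{thm:main}, in particular by \eqref{e:vancohom}: for $t_n = t_n^+$ we have $\sP\big(\bigcap_{t \ge t_n} H^k(X(n,t)) = 0\big) \to 1$. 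So on a high-probability event $A_n$ we simultaneously have $T'_{n,k} \ge t_n^-$ and $\beta_k(s) = 0$ for all $s \ge t_n^+$; it remains to control the behaviour on the short interval $[t_n^-, t_n^+]$, and to show that on this interval the vanishing of $N_k$ forces the vanishing of $\beta_k$.

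The key analytic input — and the step I expect to be the main obstacle — is the reverse approximation $\beta_k(t) \le N_k(t) + (\text{error})$, valid uniformly over $t \in [t_n^-, t_n^+]$, where the error term is $0$ with high probability. This is exactly the Morse-theoretic / bound-the-extra-cycles argument that drives \cite{SVT, Fowler2019}: every nontrivial $k$-cocycle that is not accounted for by an isolated $k$-face must be supported on a subcomplex that is ``minimally non-collapsible'' and such subcomplexes are too large (on the order of $\Theta(\log n)$ faces, spanning $\Theta(\log n / (k+1))$ vertices) to appear in the sparse regime $t \approx t_c$; a first-moment computation shows the expected number of such configurations in $X(n,t_n^+)$ tends to $0$. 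Since in the critical window the graph $G(n,t)$ changes by only finitely many edges (a union bound over the $\Theta(\log n)$ relevant edge-insertion times, each of which alters $X(n,t)$ by a bounded-complexity local move), one upgrades this to a statement uniform in $t \in [t_n^-, t_n^+]$. Combining: on the high-probability event that (a) $\beta_k = N_k$ throughout $[t_n^-, t_n^+]$, (b) $\beta_k(s) = 0$ for $s \ge t_n^+$, and (c) $T'_{n,k} \ge t_n^-$, we get that for every $s \ge T'_{n,k}$, if $N_k(s) = 0$ then either $s \ge t_n^+$, where $\beta_k(s) = 0$ directly, or $s \in [T'_{n,k}, t_n^+] \subseteq [t_n^-, t_n^+]$, where $\beta_k(s) = N_k(s) = 0$. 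Hence $T_{n,k} \le T'_{n,k}$ on this event, which together with the deterministic reverse inequality yields $\sP(T_{n,k} = T'_{n,k}) \to 1$.
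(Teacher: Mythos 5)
The central claim in your first paragraph --- that $N_k(t) \le \beta_k(t)$ deterministically, because ``an isolated $k$-face cannot be a coboundary'' --- is false, and it is used to obtain the supposedly free inequality $T'_{n,k} \le T_{n,k}$. An isolated $k$-face is just a maximal $(k+1)$-clique, and its indicator cochain, while always a $k$-cocycle, can perfectly well be a coboundary. Concretely, for $k = 1$ take the clique complex of a tree on $n \ge 2$ vertices: every edge is a maximal $1$-face (no triangles), so $N_1 = n-1$, yet $\beta_1 = 0$. More generally, any bridge not contained in a triangle is an isolated $1$-face whose class in $H^1$ is trivial. Adding your parenthetical ``strongly isolated'' condition does not save the claim either (a single isolated edge component is strongly isolated and still has $\beta_1 = 0$). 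The paper is careful to treat both directions of discrepancy; Lemma~\ref{l:bettikX'X} handles $\beta_k > N_k$ and Lemma~\ref{l:xtox'} handles $\beta_k < N_k$, and neither is excluded deterministically. So your proof cannot be rescued as stated: you would also need a high-probability bound for $T'_{n,k} > T_{n,k}$, which is not free.

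The second issue is the description of the ``key analytic input.'' You describe it as a Morse-theoretic first-moment argument over minimally non-collapsible subcomplexes of size $\Theta(\log n)$. That is not how the argument actually goes in this line of work, nor in this paper: the mechanism is Garland's criterion (Theorem~\ref{thm:garland}) and Zuk-style spectral-gap control on links, fed by the \ER spectral-gap bound (Theorem~\ref{thm:specgapER}). The difficulty that Theorem~\ref{thm:bknkapprox} resolves is quantitative --- making the spectral-gap failure probabilities and connectivity-failure probabilities simultaneously integrable in $t$ over $[t_c,1]$ and summable in $n$ so that a Borel--Cantelli argument upgrades the fixed-$t$ estimate to a process-level one. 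Your ``union bound over the $\Theta(\log n)$ edge-insertion times'' is a much looser sketch of this step and glosses over the point that the relevant events at neighbouring times are highly correlated and that the individual probabilities are not obviously small enough without the sharper estimates.

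For contrast, the paper's proof of Theorem~\ref{thm:hitting} is shorter and avoids your first pitfall entirely: it bounds
$\sP(T_{n,k} \neq T'_{n,k}) \le \sP(T_{n,k} \le t_c) + \sP(T'_{n,k} \le t_c) + \sP\big(\cup_{t \ge t_c} \{\beta_k(t) \neq N_k(t)\}\big)$,
uses Theorems~\ref{thm:main}, \ref{thm:isolatedprocess} and \ref{thm:bknkapprox} to make the right-hand side converge to $2e^{-\mu(k,c)}$, and then lets $c \to -\infty$. The observation that ``if $T_{n,k} \neq T'_{n,k}$ and both are $\ge t_c$ then $\beta_k(t) \neq N_k(t)$ for some $t \ge t_c$'' replaces any need for a deterministic comparison of the two hitting times. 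I would encourage you to adopt that symmetric bound rather than trying to establish one-sided monotonicity.
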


Our third and final result is less precise but nevertheless gives a lower bound for the probability that the random clique complex process has Kazhdan's property (T).  
\begin{theorem}
\label{thm:main1}
Let $k, c,  t_c(k,n), \mu(k,c)$ be as in the above Theorem.  Then,
$$ \limsup_{n \to \infty}  \sP\big(\bigcup_{t \geq t_c(1,n)} \{\mbox{$\Pi_1(X(n,t))$ does not have property $(T)$}\}\big) \leq 1 - e^{-\mu(1,c)},$$   
where $\Pi_1(X(n,t))$ is the fundamental group of $X(n,t)$. 
\end{theorem}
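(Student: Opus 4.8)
The plan is to deduce this statement from Theorem \ref{thm:main} in the case $k = 1$, combined with the standard fact from the theory of random groups (see e.g. \.zuk's theorem / Garland's method, as applied to clique complexes) that if the clique complex $X$ is simply connected and its first cohomology with suitable coefficients vanishes, then $\Pi_1(X)$ — being trivial — has property $(T)$ vacuously. More usefully in our range, one exploits that for $t$ in the critical window the $2$-skeleton already has enough $2$-faces that local spectral gaps (Garland-type conditions on vertex links) force $H^1(X(n,t);\mathbb{R}) = 0$ to be essentially equivalent to a spectral condition implying property $(T)$ for $\Pi_1(X(n,t))$; alternatively, one simply uses that once $X(n,t)$ is simply connected with trivial $H^1$ over $\mathbb{Q}$, there are results (for instance via the work connecting vanishing of $H^1$ and hyperbolic-type behaviour in dense clique complexes) guaranteeing property $(T)$. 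In the cleanest formulation, the key input is that for $t \geq t_c(1,n)$, on the event $\{\beta_1(X(n,t)) = 0 \ \forall t \geq t_c\}$ the complex $X(n,t)$ has $\Pi_1(X(n,t))$ with property $(T)$; granting this, the complement of the event in the statement contains $\{\cP(c) = 0\}$ asymptotically, and Theorem \ref{thm:main} gives $\liminf_n \sP(\Pi_1(X(n,t)) \text{ has }(T)\ \forall t \geq t_c) \geq \sP(\cP(c) = 0) = e^{-\mu(1,c)}$, which rearranges to the claimed $\limsup$ bound.

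Concretely, first I would fix $k = 1$ and recall that $\beta_1(X(n,t)) = 0$ for all $t \geq t_c$ is exactly the event $\{\cP(c) = 0\}$ in the limit, by the non-increasing pure-jump description of $\cP$ and the computation $\sP(\cP(c) = 0) = e^{-\mu(1,c)}$ already carried out in the excerpt for the distribution of $T_{n,k}$. Second, I would invoke a deterministic topological lemma: for the clique complex of a graph that is dense enough (which holds with high probability throughout $t \geq t_c(1,n)$, since the average degree is of order $\log n$ and all vertex links are themselves dense \ER-type graphs with large spectral gap), vanishing of the rational first cohomology together with the Garland/\.zuk local-spectral-gap criterion on links yields that $\Pi_1$ has property $(T)$. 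The high-probability verification that every vertex link $\mathrm{lk}(v)$ in $X(n,t)$ has its smallest positive normalized Laplacian eigenvalue $> 1/2$ uniformly over $t \geq t_c(1,n)$ is a routine spectral-gap estimate for \ER graphs $G(\ell, t)$ with $\ell \approx n$ and $t \gtrsim (\log n / n)^{1/2}$; this is where most of the technical work sits.

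Third, I would assemble the pieces: let $A_n$ be the event that every vertex link of $X(n,t)$ has normalized spectral gap $> 1/2$ for all $t \in [t_c(1,n), 1]$; one shows $\sP(A_n) \to 1$. On $A_n \cap \{\beta_1(X(n,t)) = 0 \ \forall t \geq t_c\}$, the local criterion gives that $\Pi_1(X(n,t))$ has property $(T)$ for all $t \geq t_c$, so the event in the theorem fails. Hence
\[
\sP\Big(\bigcup_{t \geq t_c(1,n)}\{\Pi_1(X(n,t))\text{ does not have }(T)\}\Big) \leq \sP(A_n^c) + \sP\big(\beta_1(X(n,t)) \neq 0 \text{ for some } t \geq t_c\big),
\]
and taking $\limsup$, using $\sP(A_n^c) \to 0$ and Theorem \ref{thm:main} (via the portmanteau theorem applied to the closed event $\{\cP(c) \geq 1\}$, as is done for $T_{n,k}$), the right-hand side is at most $\sP(\cP(c) \geq 1) = 1 - e^{-\mu(1,c)}$, which is the claim.

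The main obstacle is the second step: making precise, and with uniformity over the whole range $t \in [t_c(1,n), 1]$, the passage from $\beta_1 = 0$ to property $(T)$ of the fundamental group. The cohomological vanishing alone does not give $(T)$ (it gives abelianized triviality, not the nonabelian statement), so one genuinely needs the Garland-type local spectral criterion and therefore a uniform-in-$t$ spectral gap for all $n$ vertex links simultaneously — a union bound over $n$ links each requiring an exponentially good concentration estimate, plus a chaining/monotonicity argument to upgrade from a net of $t$-values to all $t$ in the window. If one is content with a weaker conclusion one could instead restrict to $t$ bounded away from $0$ where density is overwhelming, but to cover the full range down to $t_c(1,n) \asymp (\log n/n)^{1/2}$ the link-spectral-gap estimate must be pushed to its sparse limit, which is the delicate point.
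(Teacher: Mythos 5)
Your high-level strategy — Zuk's criterion plus a spectral-gap estimate on vertex links plus a Poisson-process limit — is the same as the paper's, and you correctly flag the passage from $\beta_1 = 0$ to property $(T)$ as the delicate step. But your assembly has a concrete gap precisely there. Zuk's theorem (Theorem \ref{thm:zuk}) requires the $2$-skeleton to be pure $2$-dimensional: no isolated $1$-faces (edges lying in no triangle). This is \emph{not} implied by $\beta_1(X(n,t)) = 0$. An isolated edge $\{a,b\}$ gives a nontrivial class in $H^1$ only on the additional event that $a$ and $b$ remain connected after deleting that edge, which you never invoke; so the implication ``$\beta_1 = 0 \Rightarrow$ purity'' is not deterministic and would need further probabilistic work (or the bridge via Theorem \ref{thm:bknkapprox}). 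In the same vein, your claim $\sP(A_n) \to 1$ cannot be right if $A_n$ asks for $\lambda_2 > 1/2$ on the \emph{full} vertex link: an isolated edge $\{v,w\}$ makes $w$ an isolated vertex of $\lk_v(X(n,t))$, disconnecting the link, and the probability of having no isolated edge for all $t \geq t_c(1,n)$ tends to $e^{-\mu(1,c)} < 1$, not to $1$. If instead $A_n$ refers only to the spectral gap of the giant component of each link, then $\sP(A_n) \to 1$ is fine, but $A_n \cap \{\beta_1 = 0\}$ still does not hand you the purity and link-connectivity that Zuk's hypotheses require.

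The paper avoids both problems by routing through the isolated-face process (Theorem \ref{thm:isolatedprocess}) rather than the Betti-number process (Theorem \ref{thm:main}). It bounds the failure event by the union of: existence of an isolated $1$-face for some $t \geq t_c(1,n)$ — this single event is exactly the obstruction to purity and to link-connectivity, and it carries the entire limiting mass $1 - e^{-\mu(1,c)}$ — together with the two events already controlled in STEPS 2 and 3 of the proof of Theorem \ref{thm:bknkapprox} (a vertex link having a medium-sized non-giant component, and the giant component of a vertex link having $\lambda_2 \leq 1/2$), both of which vanish in probability. Replacing your $\{\beta_1(X(n,t)) = 0\ \forall t \geq t_c\}$ by $\{N_1(t) = 0\ \forall t \geq t_c\}$ and defining $A_n$ via those giant-component and no-medium-component conditions closes your argument and makes it coincide with the paper's; as written, the deterministic step into Zuk's hypotheses is missing.
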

As with Theorem \ref{thm:main},  we can also deduce an analogue of \eqref{e:vancohom} and vanishing threshold for Kazhdan's property (T) as in \cite[Theorem 1.2]{SVT} from Theorem \ref{thm:main1}. \\

\paragraph{\bf Proof Outline:}  The proof of \cite[Theorem 1.1]{SVT} uses Garland's method (Theorem \ref{thm:garland}) which reduces vanishing of $\beta_k$ to verifying vanishing of isolated $k$-faces and large spectral gap on the links of $(k-1)$-faces.  In \cite{SVT},  the former is verified by counting arguments and Markov's inequality and the latter via the powerful spectral gap result of \cite[Theorem 1.1]{SPEC}. A similar approach is used in \cite[Theorem 1.3]{Fowler2019} to prove Poisson convergence in the critical window. Our proof involves showing analogue of Theorem \ref{thm:main} for isolated $k$-faces (Theorem \ref{thm:isolatedprocess}) and then showing that the process of isolated $k$-faces and Betti numbers coincide starting anywhere in the critical window (Theorem \ref{thm:bknkapprox}).  The latter result is the main technical contribution of this paper and allows to translate results about isolated $k$-faces to $k$th Betti number at a process-level. To prove this result, we use that $\beta_k(X(n,t))$ is a jump-process along with a Borel-Cantelli argument to reduce the proof to obtaining good estimates on the probabilities $\sP(\beta_k(X(n,t)) \neq N_k(X(n,t)))$. These estimates need to satisfy suitable integrability in $t$ and summability in $n$. For the same, we quantify the probability estimates in \cite{SVT,Fowler2019} much more carefully to enable us to derive the desired bounds via Garland's method and the spectral gap bound of \cite{SPEC}. Lastly, the same method with Zuk's criteria (Theorem \ref{thm:zuk}) replacing Garland's method yields Theorem \ref{thm:main1}. We make a more precise comparison after Remark \ref{rem:specgap}. Our proofs would simplify significantly if one is interested in the above results for $t >> t_c$; see Remark \ref{rem:gap_approx}. It should be possible to extend our methods to the multi-parameter random clique complex model as in \cite{Fowler2019,Costa16,Costa17,Costa17a}.   

\section{Preliminaries}
\label{s:prelims}
In this section we recall some basic definitions regarding (simplicial) complexes as well as collect some topological results that are used later.  The reader can turn to the books \citet{EH} and \citet{Mun} for more background on complexes and details. We shall assume familiarity with basic algebraic topology notions.

\begin{definition} A {\em simplicial complex} is a family $\mathcal{S}$ of finite sets such that if $A \in \mathcal{S}$, and $B \subseteq A$, then $B \in \mathcal{S}$. 
\begin{itemize}
\item The vertex set of $\mathcal{S}$ is the union of all its constituent sets. $V(\mathcal{S}) = \cup_{A \in \mathcal{S}} A$.
\item Every set $A \in \mathcal{S}$ is called a $k$-{\em face} of $\mathcal{S}$ where $|A| = k+1$. 
\item $A \in \mathcal{S}$ is called {\em isolated} or {\em maximal} if $\nexists$ $B \in \mathcal{S}$ such that $A \subset B$.
\item The $k$-skeleton of $\mathcal{S}$ is defined to be the sub simplicial complex consisting of all the faces of $\mathcal{S}$ that have size atmost $(k+1)$. The 1-skeleton is also referred to as the {\em underlying graph}.
\end{itemize}
\end{definition}

Every simplicial complex has well-defined topological invariants associated to it, namely its homology and cohomology groups. The reader can refer to \cite{EH} for background on how they are defined. The $k$-th cohomology group of a space $X$ with coefficients in the field $F$ will be denoted $H^k(X, F)$. The rank of $H^k(X, \mathbb{Q})$, called the {\em Betti number}, will be denoted $\beta_k(X)$.

We recall now some very useful criteria to verify homological connectivity of complexes given in terms of spectral gap of certain graphs.
\begin{definition}
The spectral gap of a graph, denoted $\lambda_2$, is defined to be the second largest eigenvalue of the symmetric normalised Laplacian of the graph $L = I - D^{-\frac{1}{2}}AD^{-\frac{1}{2}}$, where $D$ is the degree matrix and $A$ is the adjacency matrix.
\end{definition}
\begin{theorem}(\citet{G2},  \citet[Theorem 2.5]{G1}) 
\label{thm:garland}
Let $X$ be a pure $k+1$-dimensional finite simplicial complex.  If for every $(k-1)$-face $\sigma$,  the link $lk_{X}(\sigma)$ is connected and has spectral gap $\lambda_{2}[lk_{X}(\sigma)] > 1 - \frac{1}{k+1}$, then $H^{k}(X, \mathbb{Q}) = 0$.
\end{theorem}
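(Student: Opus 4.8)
\textbf{Proof proposal for Theorem \ref{thm:garland} (Garland's vanishing criterion).}

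The plan is to derive the vanishing of $H^k(X,\mathbb{Q})$ from a ``local-to-global'' spectral argument, which is the content of Garland's method. First I would recall the identification of cohomology with harmonic cochains: since $X$ is finite, $H^k(X,\mathbb{Q})$ is isomorphic to the space of harmonic $k$-cochains, i.e.\ the kernel of the combinatorial Laplacian $\Delta_k = \partial_k^* \partial_k + \partial_{k+1}\partial_{k+1}^*$ acting on $C^k(X,\mathbb{Q})$ (equivalently on real cochains, then passing to $\mathbb{Q}$ by dimension count). So it suffices to show that every harmonic $k$-cochain is zero, and for that it is enough to show that the ``down-up'' part $\partial_k^*\partial_k$ has trivial kernel on $\ker(\partial_{k+1}\partial_{k+1}^*)$; in fact we will show the stronger statement that $\Delta_k$ is positive definite by bounding its spectrum below.

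Next I would introduce the localization: for each $(k-1)$-face $\sigma$, the link $\lk_X(\sigma)$ is a graph, and a $k$-cochain $\phi$ on $X$ restricts to a function (a $0$-cochain) $\phi_\sigma$ on the vertices of $\lk_X(\sigma)$, after fixing an orientation. Garland's key computation expresses the global Laplacian quadratic form $\langle \Delta_k \phi, \phi\rangle$ as a weighted sum over $(k-1)$-faces $\sigma$ of the local quadratic forms associated to the graph Laplacian $L_{\lk_X(\sigma)}$ applied to $\phi_\sigma$, minus a correction term coming from the norm of $\phi$ itself. Concretely one obtains an identity of the shape
$$
\sum_{\sigma}\langle L_{\lk_X(\sigma)}\phi_\sigma, \phi_\sigma\rangle_{\lk_X(\sigma)} \;=\; \langle \Delta_k \phi,\phi\rangle \;+\; (k+1)\,\langle \phi,\phi\rangle,
$$
with appropriate normalizations on the inner products (the weights being degrees in the link). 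The hypothesis that each link is connected means $L_{\lk_X(\sigma)}$ has a one-dimensional kernel spanned by the constant function, and the hypothesis $\lambda_2[\lk_X(\sigma)] > 1 - \tfrac{1}{k+1} = \tfrac{k}{k+1}$ means that on the orthogonal complement of the constants, $\langle L_{\lk_X(\sigma)}\psi,\psi\rangle \geq \tfrac{k}{k+1}\,\|\psi\|^2$.

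The remaining step is to control the component of $\phi_\sigma$ along the constants. Here one uses the cocycle/coboundary structure: decompose $\phi_\sigma = \bar\phi_\sigma + \phi_\sigma^\perp$ into its constant part and its mean-zero part; a short calculation shows that $\sum_\sigma \|\bar\phi_\sigma\|^2$ is controlled by $\|\delta_k \phi\|^2$ where $\delta_k$ is the coboundary, so that for a harmonic (in particular closed) cochain $\phi$ the constant parts contribute nothing and one is left with $\sum_\sigma \langle L_{\lk_X(\sigma)}\phi_\sigma,\phi_\sigma\rangle = \sum_\sigma \langle L_{\lk_X(\sigma)}\phi_\sigma^\perp,\phi_\sigma^\perp\rangle \geq \tfrac{k}{k+1}\sum_\sigma\|\phi_\sigma^\perp\|^2 = \tfrac{k}{k+1}\sum_\sigma\|\phi_\sigma\|^2$. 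Since $\sum_\sigma \|\phi_\sigma\|^2 = (k+1)\|\phi\|^2$ (each $k$-face has $k+1$ faces of dimension $k-1$, with the weights arranged so the normalization works out), plugging into the Garland identity gives $(k+1)\cdot\tfrac{k}{k+1}\|\phi\|^2 \le \langle\Delta_k\phi,\phi\rangle + (k+1)\|\phi\|^2$, hence $\langle \Delta_k\phi,\phi\rangle \geq (k - (k+1))\|\phi\|^2$, which is not yet positive — so the bookkeeping must be done more carefully, tracking the closedness of $\phi$ to kill the down-Laplacian term and isolate the strict inequality. The honest version of this last estimate, with the coboundary term handled correctly, yields $\langle \Delta_k \phi, \phi \rangle > 0$ for all nonzero $\phi$, forcing $H^k(X,\mathbb{Q}) = 0$; I expect this careful accounting of the constant-part contribution versus the coboundary norm — i.e.\ getting the strict inequality out of $\lambda_2 > \tfrac{k}{k+1}$ rather than $\geq$ — to be the main obstacle, and I would lean on the references \citet{G2} and \citet[Theorem 2.5]{G1} for the precise form of the identity rather than rederiving every weight.
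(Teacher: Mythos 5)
First, note that the paper does not prove this statement at all: it is quoted as background from Garland and \citet[Theorem 2.5]{G1}, so your sketch must stand on its own against the standard Garland/Ballmann--\'Swi\k{a}tkowski argument. Your outline has the right skeleton (harmonic representatives, localization of a $k$-cochain $\phi$ to functions $\phi_\sigma$ on the links of $(k-1)$-faces, spectral gap of the normalized link Laplacians), but there is a genuine gap precisely where the theorem lives. The local-to-global identity you write down is off by one in the correction term, and, as you yourself observe, with your normalization $\sum_\sigma\|\phi_\sigma\|^2=(k+1)\|\phi\|^2$ it only yields $\langle\Delta_k\phi,\phi\rangle\ge -\|\phi\|^2$; you then assert that an ``honest version'' gives strict positivity and defer to the references. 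That deferred bookkeeping is the entire content of the theorem -- in particular it is exactly where the threshold $1-\frac{1}{k+1}$ comes from -- so the proposal does not establish the statement. In the standard weighting (each face weighted by the number of $(k+1)$-faces containing it, which is where purity of $X$ enters), the correct identity has the form
\begin{equation*}
\sum_{\sigma\in X^{(k-1)}}\mathcal{E}_{\mathrm{lk}(\sigma)}(\phi_\sigma)\;=\;\|\delta\phi\|^2\;+\;k\,\|\phi\|^2,
\end{equation*}
where $\mathcal{E}_{\mathrm{lk}(\sigma)}$ is the Dirichlet form of the link graph (the per-top-face computation produces the coefficient $k$, not $k+1$, once orientation signs are tracked). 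With $k$ in place of your $k+1$, a harmonic $\phi$ satisfies $\min_\sigma\lambda_2[\mathrm{lk}(\sigma)]\cdot(k+1)\|\phi\|^2\le k\|\phi\|^2$, which contradicts $\lambda_2>\frac{k}{k+1}$ unless $\phi=0$; with your constant the inequality is vacuous, which is exactly the degeneration you ran into.

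The second problem is the mechanism you propose for killing the constant components of the $\phi_\sigma$. You claim $\sum_\sigma\|\bar\phi_\sigma\|^2$ is controlled by $\|\delta_k\phi\|^2$; this is not the right statement. The weighted mean of $\phi_\sigma$ over $\mathrm{lk}(\sigma)$ (with vertex $v$ weighted by its link degree, i.e.\ the number of $(k+1)$-faces containing $\sigma\cup\{v\}$) is, up to a positive factor, precisely $(\delta^*\phi)(\sigma)$, the codifferential. So it is co-closedness $\delta^*\phi=0$ of the harmonic representative, not closedness or a bound by the coboundary norm, that makes each $\phi_\sigma$ orthogonal to the constants; connectivity of the link is then used to guarantee that the kernel of the link Laplacian is exactly the constants, so that the hypothesis $\lambda_2[\mathrm{lk}(\sigma)]>\frac{k}{k+1}$ gives $\mathcal{E}_{\mathrm{lk}(\sigma)}(\phi_\sigma)\ge\lambda_2\|\phi_\sigma\|^2$. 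Combining this with the identity above and $\sum_\sigma\|\phi_\sigma\|^2=(k+1)\|\phi\|^2$ closes the argument over $\mathbb{R}$, and hence over $\mathbb{Q}$ since the Betti numbers agree. To repair your write-up you would need to (i) state and prove the localization identity with the correct constant $k$, and (ii) replace the ``controlled by $\|\delta_k\phi\|^2$'' step by the identification of the constant part of $\phi_\sigma$ with $(\delta^*\phi)(\sigma)$.
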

\begin{theorem}(\citet[Theorem 1]{zuk2003property}) \label{thm:zuk} If $X$ is a pure 2-dimensional locally-finite simplicial complex such that for every vertex $v$, the vertex link $lk(v)$ is connected and the normalized Laplacian $L = L[lk(v)]$ satisfies $\lambda_2(L) > 1/2$, then $\pi_1(X)$ has property (T).

\end{theorem}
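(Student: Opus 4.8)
The plan is to run the argument behind Theorem \ref{thm:main} at $k=1$, but with Zuk's criterion (Theorem \ref{thm:zuk}) in place of Garland's method (Theorem \ref{thm:garland}). Fix $c \in \BR$ and write $t_c := t_c(1,n)$, and let $X^{(2)}(n,t)$ denote the $2$-skeleton of $X(n,t)$. Since the fundamental group depends only on the $2$-skeleton, $\Pi_1(X(n,t)) \cong \Pi_1(X^{(2)}(n,t))$; moreover $X^{(2)}(n,t)$ is pure $2$-dimensional precisely when $G(n,t)$ has no isolated vertex and $N_1(X(n,t)) = 0$ (recall $N_1$ counts the edges lying in no triangle), and for each $t$ the vertex link $\lk_{X^{(2)}(n,t)}(v)$ is the subgraph of $G(n,t)$ induced on the neighbourhood of $v$ — exactly the graph whose spectral gap is controlled when Garland's method is applied at $k=1$ in the proof of Theorem \ref{thm:bknkapprox}. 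Accordingly I would set
$$\mathcal{G}_n(c) := \{N_1(X(n,s)) = 0 \text{ for all } s \geq t_c\} \cap \mathcal{N}_n(c),$$
where $\mathcal{N}_n(c)$ is the event that $G(n,t_c)$ has no isolated vertex and, for every $s \geq t_c$, every vertex link of $X(n,s)$ is connected with spectral gap $\lambda_2 > 1/2$. On $\mathcal{G}_n(c)$ (using that absence of isolated vertices is preserved as $s$ increases, by monotonicity of the graph process, and that $N_1(X(n,s))=0$ forces $\lk_{X^{(2)}(n,s)}(v)$ to coincide with $\lk_{X(n,s)}(v)$), the complex $X^{(2)}(n,s)$ satisfies the hypotheses of Theorem \ref{thm:zuk} for all $s \geq t_c$, so $\Pi_1(X(n,s))$ has property $(T)$ for all $s \geq t_c$. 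Hence
$$\sP\Big(\bigcup_{t \geq t_c}\{\Pi_1(X(n,t))\text{ does not have property }(T)\}\Big) \leq 1 - \sP(\mathcal{G}_n(c)),$$
and it remains to prove $\liminf_{n} \sP(\mathcal{G}_n(c)) \geq e^{-\mu(1,c)}$.

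For this I would bound $\sP(\mathcal{G}_n(c)) \geq \sP\big(N_1(X(n,s)) = 0\ \forall s \geq t_c\big) - \sP(\mathcal{N}_n(c)^c)$ and treat the two terms separately. The first equals $\sP(T'_{n,1} \leq t_c)$, which, by the $k=1$ instance of the process-level convergence of isolated faces (Theorem \ref{thm:isolatedprocess}, equivalently Proposition \ref{prop:van_isol_faces}) together with the non-increasing, right-continuous nature of the limiting Poisson process, converges to $\sP(\cP(c) = 0) = e^{-\int_c^\infty \mu(1,x)\,\md x} = e^{-\mu(1,c)}$ — precisely the deduction used in the text to obtain the limit law of $T_{n,k}$ from Theorem \ref{thm:main}. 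The second term: $\sP(\mathcal{N}_n(c)^c) \to 0$ is, modulo the routine control of isolated vertices of $G(n,t_c)$ (which holds with room to spare in this regime), exactly the process-level spectral-gap estimate established inside the proof of the $k=1$ case of Theorem \ref{thm:bknkapprox}. Indeed, for a fixed $s$ it follows from a union bound over the $n$ vertices together with the spectral-gap bound of \cite[Theorem 1.1]{SPEC} applied to the neighbourhood-induced subgraphs — which, conditionally on the neighbourhood set, are \ER graphs on roughly $ns$ vertices with parameter $s$, whose second normalized Laplacian eigenvalue exceeds $1/2$ with very high probability throughout $s \in [t_c,1]$ — and the passage to "for all $s \geq t_c$ simultaneously" is handled by discretizing $[t_c,1]$ at the finitely many edge-insertion times $\{U(i,j)\}$ and running the Borel--Cantelli / jump-process argument of Section \ref{s:proofs} on the quantitative (in $n$, uniform in $s$) form of these estimates. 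Combining the two bounds gives $\liminf_n \sP(\mathcal{G}_n(c)) \geq e^{-\mu(1,c)}$, which finishes the argument.

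The hard part is the estimate $\sP(\mathcal{N}_n(c)^c) \to 0$: one must keep the spectral gaps of all neighbourhood-induced subgraphs above $1/2$ not merely at a fixed time but uniformly over $s \in [t_c,1]$, with failure probabilities summable in $n$ after discretization — the technical core shared with Theorem \ref{thm:bknkapprox}, where the estimates of \cite{SVT,Fowler2019} must be re-derived in quantitative form via \cite{SPEC}. A secondary, conceptual remark explains the shape of the statement: Zuk's criterion is only a \emph{sufficient} condition for property $(T)$, so the complement of $\mathcal{G}_n(c)$ does not force the failure of $(T)$, whence one gets an upper bound (and a $\limsup$) rather than a limit; and, unlike in Garland's method as used for Theorem \ref{thm:bknkapprox} — where one deletes the isolated $1$-faces and applies Garland to the remainder — Zuk's theorem needs the \emph{whole} $2$-skeleton to be pure, so one genuinely requires $N_1(X(n,t)) = 0$, and the residual probability $1 - e^{-\mu(1,c)}$ is asymptotically exactly the chance that an isolated $1$-face is present at some time $\geq t_c$.
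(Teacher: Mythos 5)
You have not proved the statement in question. The statement is Zuk's spectral criterion itself (Theorem \ref{thm:zuk}): a deterministic assertion about an arbitrary pure $2$-dimensional locally-finite simplicial complex, namely that connectedness of all vertex links together with $\lambda_2(L[lk(v)])>1/2$ forces $\pi_1(X)$ to have property (T). Your proposal invokes Theorem \ref{thm:zuk} in its very first step and then uses it, together with the isolated-face Poisson asymptotics and the spectral-gap estimates, to derive the paper's Theorem \ref{thm:main1}. As a proof of the stated result this is circular: you assume exactly what is to be shown, and the probabilistic apparatus you deploy (Theorem \ref{thm:isolatedprocess}, the bounds from \cite{SPEC}, the Borel--Cantelli/jump-time argument of Theorem \ref{thm:bknkapprox}) has no bearing on a statement that involves no randomness at all.

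Note also that the paper itself does not prove Theorem \ref{thm:zuk}; it is quoted from \citet[Theorem 1]{zuk2003property}, and its proof lives in a different world: one studies the cochain complex of $X$ with coefficients in an arbitrary unitary representation of $\pi_1(X)$ and runs a Garland-type curvature/spectral argument to show vanishing of the relevant first cohomology uniformly over representations, which yields property (T); the $1/2$ threshold comes out of that local-to-global estimate on the links. None of this can be extracted from the random clique complex machinery. What you have written is, in substance, a (reasonable, and roughly parallel to the paper's own) outline of the proof of Theorem \ref{thm:main1} --- bounding the failure probability by the events that some isolated $2$-face persists, some vertex link is disconnected, or some link has small spectral gap --- but that is a different theorem from the one you were asked to prove.
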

%

%\subsection{Topological Lemmas: Deletion of isolated faces}
%\label{s:toplemmas}

Now we shall state and prove some lemmas which will help us to understand how removal of isolated faces affects Betti numbers.  Though these are implicitly used in \cite[Theorem 1.2]{Fowler2019},  we explicitly state and prove them here for convenience as well as to delineate the deterministic parts of our proofs from the probabilistic parts.

Given a simplicial complex $X$,  define $V_{k} \subset V $,  as the set of vertices which are isolated vertices in the link of some $(k-1)$-face of $X$.  Define $\Sigma$ as the set of maximal $k$-faces. Suppose $\Sigma = \{\sigma_1, \dots, \sigma_m\}$.  Define $X' = X \setminus \Sigma$,  the simplicial complex obtained from $X$ by deleting all the maximal $k$-faces. 

\begin{lemma}\label{l:lkX'conn}
A vertex $v \in V_{k}$ if and only if $v$ is the vertex of some $k$-face in $\Sigma$. It follows that 
\begin{enumerate}
\item $V_{k}$ is empty iff $\Sigma$ is empty.
\item Let $\tau$ be a $(k-1)$-face of $X'$.  Then $lk_{X'}(\tau)$ is connected if and only if $lk_{X}(\tau)$ is a graph with one non-trivial component (i.e.,  a component with at least two-vertices) and the rest are isolated vertices.
\end{enumerate}
\end{lemma}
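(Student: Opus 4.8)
The plan is to first establish the characterization of $V_k$ and then deduce the two numbered consequences. For the main equivalence, recall that $v \in V_k$ means $v$ is an isolated vertex in $\lk_X(\sigma)$ for some $(k-1)$-face $\sigma$; equivalently, $\sigma \cup \{v\}$ is a $k$-face of $X$ but no vertex $w$ satisfies $\sigma \cup \{v,w\} \in X$. I claim this is exactly the statement that $\sigma \cup \{v\}$ is a maximal $k$-face, i.e.\ lies in $\Sigma$. Indeed, if $\sigma \cup \{v\}$ were contained in a larger face, that face would contain some $\sigma \cup \{v,w\}$, contradicting isolation; conversely if $\sigma\cup\{v\}$ is maximal then no such $w$ exists, so $v$ is isolated in $\lk_X(\sigma)$. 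This gives ``$v \in V_k$ iff $v$ is a vertex of some $\tau \in \Sigma$'' directly. Part (1) is then immediate: $V_k = \emptyset$ precisely when no maximal $k$-face has any vertex, which (since every nonempty face has a vertex) happens iff $\Sigma = \emptyset$.

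For part (2), fix a $(k-1)$-face $\tau$ of $X'$ (so $\tau$ survives the deletion, which is automatic since only $k$-faces are removed). The key observation is that the vertex set of $\lk_{X'}(\tau)$ equals that of $\lk_X(\tau)$ — deleting maximal $k$-faces from $X$ removes edges but no vertices from $\lk_X(\tau)$, because a vertex $v$ of $\lk_X(\tau)$ corresponds to the $k$-face $\tau \cup \{v\}$, and if that face is maximal then $v$ is an isolated vertex of $\lk_X(\tau)$ and stays a vertex of $\lk_{X'}(\tau)$, while if it is non-maximal it is not deleted at all. Next I would identify which edges of $\lk_X(\tau)$ are deleted: an edge $\{v,w\}$ of $\lk_X(\tau)$ corresponds to the $k$-face $\tau\cup\{v,w\}$ having $\tau\cup\{v\}$ and $\tau\cup\{w\}$ as faces; removing the maximal $k$-faces $\tau\cup\{v\}$, $\tau\cup\{w\}$ from $X$ does not remove $\tau \cup \{v,w\}$ from $X'$ unless... — actually one checks that $\tau\cup\{v\}$ maximal forces $v$ isolated in $\lk_X(\tau)$, so it has no such edge. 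Hence the edges of $\lk_{X'}(\tau)$ are exactly the edges of $\lk_X(\tau)$ among non-isolated vertices, i.e.\ $\lk_{X'}(\tau) = \lk_X(\tau)$ as graphs. Therefore $\lk_{X'}(\tau)$ is connected iff $\lk_X(\tau)$ is connected, and since a graph with isolated vertices is connected only if it has at most one vertex total, the precise statement to prove is: $\lk_{X'}(\tau)$ connected iff $\lk_X(\tau)$ consists of one non-trivial component plus isolated vertices — wait, that is not ``$\lk_X(\tau)$ connected''.

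Let me restate part (2) carefully, since the subtlety is that $\lk_{X'}(\tau)$ and $\lk_X(\tau)$ need \emph{not} be equal as graphs. When we delete a maximal $k$-face $\tau \cup \{v\}$ (with $v$ isolated in $\lk_X(\tau)$), the vertex $v$ and its incident (non-)edges vanish from $\lk_{X'}(\tau)$ entirely, because $\tau\cup\{v\}$ is no longer a face of $X'$. Thus $\lk_{X'}(\tau)$ is obtained from $\lk_X(\tau)$ by deleting all isolated vertices that arise from maximal faces through $\tau$. Concretely, $V(\lk_{X'}(\tau)) = V(\lk_X(\tau)) \setminus \{v : \tau \cup \{v\} \in \Sigma\}$, and the latter exclusion set is exactly the set of isolated vertices of $\lk_X(\tau)$ by the main equivalence applied with $\sigma = \tau$. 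So $\lk_{X'}(\tau)$ is $\lk_X(\tau)$ with all its isolated vertices removed. A graph with its isolated vertices removed is connected and non-empty iff the original had exactly one non-trivial component and no further structure beyond isolated vertices; it is empty (vacuously connected) iff the original was a disjoint union of isolated vertices, which happens iff $\tau$ lies only in maximal $k$-faces. I would handle the empty-link edge case according to the paper's convention (the statement implicitly assumes $\lk_X(\tau)$ has at least one edge when $\tau$ is a $(k-1)$-face of $X'$), and then the equivalence follows. The main obstacle here is purely bookkeeping: correctly tracking that deleting a maximal $k$-face simultaneously removes a vertex from the link of its $(k-1)$-subfaces, so that ``connected'' for $\lk_{X'}$ translates into the ``one big component plus isolated vertices'' condition for $\lk_X$ rather than into outright connectedness of $\lk_X(\tau)$.
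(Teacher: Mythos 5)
Your final argument is correct and matches the paper's: the key observation is that $\lk_{X'}(\tau)$ is obtained from $\lk_X(\tau)$ by deleting exactly its isolated vertices (these being precisely the $v$ with $\tau\cup\{v\}\in\Sigma$, by the first part of the lemma applied with $\sigma=\tau$), and the claimed equivalence in (2) then reads off directly. Be aware, though, that the first half of your treatment of (2) is simply false --- you assert there that ``the vertex set of $\lk_{X'}(\tau)$ equals that of $\lk_X(\tau)$'' and conclude $\lk_{X'}(\tau)=\lk_X(\tau)$ as graphs, which contradicts the statement being proved (if the two links were equal the condition would be ``$\lk_X(\tau)$ is connected,'' not ``one non-trivial component plus isolated vertices''); you do catch this yourself and restart, so the submitted proof is ultimately sound, but a cleaned-up version should excise the false start rather than leave it in. Also a small slip in that discarded passage: for $\tau$ a $(k-1)$-face, $\tau\cup\{v,w\}$ is a $(k+1)$-face, not a $k$-face.
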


\begin{proof}
We prove (1) first. Suppose $v \in V_k$. Then, there exists a $k$-face $\sigma$ with vertices $a_1, \dots, a_{k+1}$ in $X$ such that $v \in lk_X(\sigma)$, i.e., $a_1, \dots, a_{k+1}, v$ is a $(k+1)$-face of $X$.  Further, $v$ is isolated in $ lk_X(\sigma)$, which means that there is no vertex $y$ such that $a_1, \dots, a_{k+1}, v, y$ are the vertices a $(k+2)$-face of $X$. This means that $\sigma$ is a maximal $(k+1)$-face, i.e., $\sigma \in \Sigma$.

Conversely, suppose $v$ is the vertex of $\sigma \in \Sigma$. Suppose the vertices of $\sigma$ are $a_1, \dots, a_{k+1}, v$. Call the $k$-face with vertices $a_1, \dots, a_{k+1}$ as $\tau$. Then, $v \in lk_X(\tau)$. Also, since there is no $(k+2)$-face such that $\sigma$ is a subface of that, $v$ is an isolated vertex in $lk_X(\tau)$, which means $v \in V_k$. 

(2) follows clearly by observing that $lk_{X'}(\tau)$ is obtained by removing vertices corresponding to maximal $k$-faces containing $\tau$ from $lk_{X}(\tau)$.
\end{proof}

\begin{lemma}\label{l:bettikX'X}
Let $X$ be a simplicial complex as above with exactly $m$ maximal faces. Suppose $X'$ is obtained from $X$ by removing all the maximal $k$-faces of $X$. Then, $\beta_k(X) > m$ implies $\beta_k(X') \geq 1$.
\end{lemma}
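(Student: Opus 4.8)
The plan is to compare the cohomology of $X$ and $X'$ via the exact sequence of the pair, or more directly via a Mayer–Vietoris / dimension-counting argument exploiting that $X'$ is obtained from $X$ by deleting $m$ top-dimensional (maximal $k$-)faces. The key observation is that removing a maximal $k$-face $\sigma_i$ from a simplicial complex can only change the cochain complex in degrees $k$ and $k+1$: it deletes one generator from $C^k$ and (since $\sigma_i$ was maximal) does not touch $C^j$ for $j \neq k$. So $X$ and $X'$ have identical cochain groups in all degrees except degree $k$, where $C^k(X) = C^k(X') \oplus \langle \sigma_1^*, \dots, \sigma_m^* \rangle$ has rank exactly $m$ larger.

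First I would set up the cochain complexes over $\mathbb{Q}$ and note $Z^{k+1}(X) = Z^{k+1}(X')$ as subspaces of $C^{k+1}(X) = C^{k+1}(X')$ (a cocycle condition in degree $k+1$ involves coboundaries of $k$-faces, but a maximal $k$-face appears in no $(k+1)$-face, so $d_k$ restricted to the span of the $\sigma_i^*$ is zero — hence $B^{k+1}(X) = B^{k+1}(X')$ as well, giving $H^{k+1}(X)=H^{k+1}(X')$, which I don't even need). What I do need: $B^k(X) = B^k(X')$ since $C^{k-1}$ and $d_{k-1}$ are unchanged, and $Z^k(X') = Z^k(X) \cap C^k(X')$. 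The point is that $Z^k(X)$ sits inside $C^k(X) = C^k(X') \oplus \mathbb{Q}^m$, and $Z^k(X')$ is its intersection with the first summand; so $\dim Z^k(X) - \dim Z^k(X') \leq m$ (the codimension of $C^k(X')$ in $C^k(X)$). Therefore
$$\beta_k(X') = \dim Z^k(X') - \dim B^k(X') \geq \dim Z^k(X) - m - \dim B^k(X) = \beta_k(X) - m,$$
and the hypothesis $\beta_k(X) > m$ forces $\beta_k(X') \geq 1$.

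**The main obstacle** — or rather the point requiring care — is justifying that deleting a maximal $k$-face leaves $C^{k-1}$, $C^{k+1}$ and all the relevant coboundary maps genuinely unchanged, i.e. that maximality is exactly what guarantees $\sigma_i$ is in no higher face and that the complex $X'$ is still a simplicial complex (downward closure is preserved since we only remove top faces). One should double-check the edge case $k=0$ is either excluded or handled, and state clearly that all groups are taken with rational coefficients so that ``rank'' means vector-space dimension and the rank–nullity bookkeeping above is valid. With these points nailed down the inequality $\beta_k(X') \geq \beta_k(X) - m$ is immediate and the lemma follows.
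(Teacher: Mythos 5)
Your proof is correct and reaches the same bound $\beta_k(X') \geq \beta_k(X) - m$, but by a genuinely different route from the paper's. The paper works at the level of cohomology groups: it takes the characteristic $k$-cochains $\phi_1,\dots,\phi_m$ of the maximal faces, observes they are cocycles (maximality means no $(k+1)$-face contains $\sigma_i$, so $\delta^k\phi_i = 0$), adjoins the classes $[\phi_i]$ to a generating set of $H^k(X)$, and argues that passing to $X'$ imposes exactly the relations $[\phi_i]=0$, which by the hypothesis $\beta_k(X) > m$ cannot kill every generator. Your argument is a more explicit linear-algebra count on the cochain level and is arguably cleaner; the paper's version is quicker but asserts, without detailed verification, that $H^k(X')$ is the quotient of $H^k(X)$ by the $[\phi_i]$.

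One point needs correcting. The equality $B^k(X) = B^k(X')$ is not right, even under the extension-by-zero identification $C^k(X') \hookrightarrow C^k(X)$: a coboundary $\delta^{k-1}\lambda$ in $X$ generally takes nonzero values on the removed faces $\sigma_i$ (take $\lambda$ the characteristic function of a $(k-1)$-subface of some $\sigma_i$), so $B^k(X) \not\subset C^k(X')$. What is true, and what your inequality actually uses, is that $\delta^{k-1}_{X'} = \pi \circ \delta^{k-1}_X$ where $\pi\colon C^k(X) \to C^k(X')$ is the restriction, hence $B^k(X') = \pi\big(B^k(X)\big)$ and $\dim B^k(X') \leq \dim B^k(X)$. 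Since $\dim B^k(\cdot)$ enters your chain with a minus sign, the inequality $\beta_k(X') \geq \dim Z^k(X) - m - \dim B^k(X) = \beta_k(X) - m$ still holds; just replace the claimed equality with this inequality and the argument is airtight. Your discussion of $Z^k(X') = Z^k(X) \cap C^k(X')$ and the codimension-$m$ bound is exactly right, and your care about downward closure of $X'$ and the exclusion of $k=0$ is appropriate (the paper restricts to $k \geq 1$ throughout).
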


\begin{proof}
Suppose $\beta_k(X) > m$. Thus, $H^k(X)$ is an abelian group with at least $(m+1)$ infinite order generators. Consider a generating set of $H^k(X)$ that contains $[\phi_1],\dots,[\phi_m]$, the cohomology classes corresponding to the characteristic functions of the $m$ maximal $k$-faces $\sigma_1, \dots, \sigma_m$. As the $\sigma_i$ are maximal, it is clear that $\delta^k (\phi_i) = 0$, so they are well-defined cohomology classes. As the $[\phi_i]$ may or may not be linearly independent, the generating set for $H^k(X)$ would have at least one other infinite order generator. The group $H^k(X')$ is obtained from $H^k(X)$ with the same generating set, but by adding the relations $[\phi_1]=1,\dots,[\phi_m]=1$. However, as there is at least one more generator not involving any of the $[\phi_i]$, $H^k(X')$ has at least one infinite order generator. Thus $\beta_k(X') \geq 1$.
\end{proof}

\begin{lemma}\label{l:xtox'}
Let $X$ be a simplicial complex as above with exactly $m$ maximal faces. Suppose $X'$ is obtained from $X$ by removing all the maximal $k$-faces of $X$. Then, $\beta_k(X) < m$ implies $\beta_{k-1}(X') \geq 1$.
\end{lemma}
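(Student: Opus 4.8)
The plan is to use the long exact sequence in cohomology of the pair $(X,X')$, after first observing that the relative cochain complex is concentrated in a single degree. Throughout I work with rational coefficients, which I suppress from the notation; all cohomology groups are finite-dimensional since $X$ is finite.

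First I would record how passing from $X$ to $X'$ changes the cochain groups. Since each $\sigma_i\in\Sigma$ is a maximal $k$-face, it is contained in no $(k+1)$-face, and hence no face of $X$ of dimension $\ge k+1$ has some $\sigma_i$ as a subface; on the other hand no face of dimension $\le k-1$ equals a $\sigma_i$. Therefore $X'=X\setminus\Sigma$ retains every face of $X$ other than $\sigma_1,\dots,\sigma_m$, so that $C^j(X')=C^j(X)$ for all $j\neq k$, while $C^k(X')=C^k(X)/W$, where $W:=\operatorname{span}\{\phi_1,\dots,\phi_m\}\cong\mathbb{Q}^m$ is the span of the characteristic cochains of the $\sigma_i$. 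Writing $C^*(X,X')$ for the kernel of the (surjective) restriction map $C^*(X)\to C^*(X')$, we get a short exact sequence of cochain complexes $0\to C^*(X,X')\to C^*(X)\to C^*(X')\to 0$ in which $C^k(X,X')\cong W$ and $C^j(X,X')=0$ for all $j\neq k$. Hence the relative complex is concentrated in degree $k$, giving $H^k(X,X')\cong W\cong\mathbb{Q}^m$ and $H^j(X,X')=0$ for $j\neq k$.

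Next I would feed this into the long exact sequence of the pair, which around degrees $k-1,k$ reads
$$0=H^{k-1}(X,X')\to H^{k-1}(X)\to H^{k-1}(X')\to H^k(X,X')\to H^k(X)\to H^k(X')\to H^{k+1}(X,X')=0,$$
so that
$$0\to H^{k-1}(X)\to H^{k-1}(X')\to \mathbb{Q}^m\to H^k(X)\to H^k(X')\to 0$$
is exact. Taking the alternating sum of the dimensions of the terms of this finite exact sequence yields
$$\beta_{k-1}(X')=\beta_{k-1}(X)+m-\beta_k(X)+\beta_k(X').$$
Finally, if $\beta_k(X)<m$ then $m-\beta_k(X)\ge 1$, and since $\beta_{k-1}(X)\ge 0$ and $\beta_k(X')\ge 0$ we conclude $\beta_{k-1}(X')\ge 1$, which is the claim.

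The only step with any content is the first one, namely verifying that deleting the maximal $k$-faces forces no deletion of higher-dimensional faces, so that the relative complex lives in degree $k$ alone; the rest is bookkeeping with the long exact sequence. As an alternative one can avoid the long exact sequence and argue directly at cochain level: setting $d:=\dim(\operatorname{im}\delta^{k-1}_X\cap W)$, one computes $\beta_k(X')=\beta_k(X)-m+d$ and $\beta_{k-1}(X')=\beta_{k-1}(X)+d$, so that $\beta_k(X)<m$ together with $\beta_k(X')\ge 0$ forces $d\ge m-\beta_k(X)\ge 1$, and hence $\beta_{k-1}(X')\ge 1$.
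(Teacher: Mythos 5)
Your proof is correct, and it takes a genuinely different route from the paper. The paper argues directly at the cochain level: if $\beta_k(X)<m$ the cocycles $\phi_1,\dots,\phi_m$ are dependent in $H^k(X)$, so some nontrivial combination $\sum a_i\phi_i=\delta^{k-1}\lambda$; this $\lambda$ is not a cocycle in $X$ (its coboundary is nonzero) hence not a coboundary in $X$ or $X'$, but it becomes a cocycle in $X'$ once the $\sigma_i$ are deleted, so $[\lambda]\neq 0$ in $H^{k-1}(X')$. You instead package the same phenomenon into the long exact cohomology sequence of the pair $(X,X')$, using that the relative complex is concentrated in degree $k$ with $H^k(X,X')\cong\mathbb{Q}^m$ (this is where the maximality of the $\sigma_i$ enters, guaranteeing $C^{k+1}(X)=C^{k+1}(X')$ so that $\delta^k$ vanishes on $W$), and then read off the inequality from the alternating sum of ranks in the resulting five-term sequence. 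Your approach is more systematic: the identity $\beta_{k-1}(X')=\beta_{k-1}(X)+m-\beta_k(X)+\beta_k(X')$ gives strictly more information than the stated inequality, and the same exact sequence yields Lemma~\ref{l:bettikX'X} for free (since $\beta_k(X')\ge\beta_k(X)-m$). The paper's argument is more elementary — no relative cohomology or exact-sequence machinery — and produces an explicit witness class $[\lambda]\in H^{k-1}(X')$, which can be useful if one wants to say something about the nontrivial class rather than merely count ranks.
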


\begin{proof}
Call the characteristic functions of the $m$ maximal faces $\phi_1, \dots, \phi_m$. These are $k$-cocycles, as they correspond to maximal faces. Then, if $\beta_k (X) < m$, the $\phi_i$ must be linearly dependent in $H^{k}(X)$, i.e., there must be a $(k-1)$ cochain $\lambda$ such that $\delta^{k-1}(\lambda) = \sum_{i=1}^m a_i\phi_i$. Thus, it follows that $\lambda$ is not a $(k-1)$-coboundary. Now consider $X' = X - \{\sigma_1, \dots , \sigma_{m}\}$. Now, $\delta^{k-1}(\lambda)=0$ in $X'$, and $\lambda$ is not a coboundary in $X$ or $X'$, which means $\lambda$ would give a nontrivial element in $H^{k-1}(X')$. 
\end{proof}

\section{Proofs}
\label{s:proofs}

We give proofs of our main theorems in this section.  The analogue of Theorem \ref{thm:main} for isolated faces is proved in Section \ref{s:vanisolfaces}.  We relate the process of isolated faces to Betti number process in Section \ref{s:frombknk} with proof of some technical lemmas postponed to Section \ref{s:problemmas}.  In Section \ref{s:proofthmmain1},  we complete the proof of our main theorems. 

\subsection{Poisson process convergence of Isolated faces - Theorem \ref{thm:isolatedprocess}}
\label{s:vanisolfaces}

Recall that $N_k(t)$ is the number of maximal $(k+1)$-cliques in $G(n,t)$ or the number of isolated $k$-faces in $X(n,t)$.  We can define this formally as
$$ N_k(t) :=  \frac{1}{(k+1)!} \sum^{\neq}_{i_1,\ldots,i_{k+1}} \1[ \mbox{$i_1,\ldots,i_{k+1}$ form a maximal clique in $G(n,t)$. }],$$
where  $\sum^{\neq}$ denotes that the sum is over distinct indices.  Since we are interested in persistent homology,  we need to understand maximal cliques that exist for any $s > t$.  We define these as 
$$ N_k^*(t) :=  \frac{1}{(k+1)!} \sum^{\neq}_{i_1,\ldots,i_{k+1}} \1[ \mbox{$i_1,\ldots,i_{k+1}$ form a maximal clique in $G(n,s)$ for some $s \geq t$. }].$$
Note that while $N_k(t)$ is non-monotonic in $t$,  $N_k^*(t)$ is monotonic.  Trivially,  we have that $N_k(t) \leq N_k^*(t)$ but we show that both are asymptotically equal at $t_c$ onwards.  
\begin{proposition}
\label{prop:van_isol_faces}Let $k \geq 1$ and $t_c(k,n) =  \big(\frac{(\frac{k}{2}+1)\log n + \frac{k}{2}\log \log n + c}{n}\big)^{\frac{1}{k+1}}$ for $n \geq 3$.   Then we have that as $n \to \infty$, 
\begin{equation}
\label{e:cvgnknk*}
 \E[N_k^*(t_c(k,n)) - N_k(t_c(k,n))] \to 0.
 \end{equation}
Consequently,  $N_{k}^{*}(t_c(k,n))$ converges in distribution to a Poisson random variable with mean $\mu(k,c)$ as in Theorem \ref{thm:main}.
\end{proposition}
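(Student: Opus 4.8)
The plan is to estimate the expected difference $\E[N_k^*(t_c) - N_k(t_c)]$ directly as a sum over $(k+1)$-subsets of $[n]$, and to show this sum vanishes; the Poisson convergence of $N_k^*(t_c)$ will then follow from that of $N_k(t_c)$, which is standard (second-moment / Chen--Stein computation as in \cite{SVT,Fowler2019}). First I would write
\[
\E[N_k^*(t_c) - N_k(t_c)] = \binom{n}{k+1}\, \sP(A),
\]
where $A$ is the event that a fixed $(k+1)$-set $S$ (say $\{1,\dots,k+1\}$) forms a maximal clique in $G(n,s)$ for some $s \ge t_c$ but is \emph{not} maximal at time $t_c$ itself. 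For $S$ to be a maximal clique at some later time, all $\binom{k+1}{2}$ internal edges must be present by that time; for $S$ to fail to be maximal at $t_c$, there must be a vertex $v \notin S$ all of whose $k+1$ edges to $S$ are present at time $t_c$. Combining these, $A$ forces: every internal edge of $S$ has weight $\le s$ for the relevant $s$, and there is a ``spoiler'' vertex $v$ whose edges to $S$ are all $\le t_c$, yet at the later time $s$ when $S$ becomes maximal, $v$ is no longer a cone point --- which is impossible, since once all $k+1$ edges from $v$ to $S$ are present (by time $t_c \le s$) they stay present. So in fact $A$ can only occur if $S$ is \emph{never} maximal at any time $\ge t_c$ in a way compatible with having been spoiled; more carefully, the event $\{S \text{ maximal at some } s \ge t_c\}$ and $\{S \text{ has a spoiler at } t_c\}$ are genuinely in tension, and one should bound $\sP(A)$ by the probability that, for \emph{some} $s \ge t_c$, all internal edges of $S$ are $\le s$ and every external vertex $v$ has $\max$-edge-to-$S$ in $(t_c, s]$ or $> s$, while at least one $v$ has all edges $\le t_c$. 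The clean way: $A \subseteq \{$all internal edges of $S$ are $\le 1\} \cap \{\exists v: \text{all } k+1 \text{ edges } v\text{-}S \text{ are } \le t_c\} \cap \{\exists v': \text{all } k+1 \text{ edges } v'\text{-}S \text{ exceed } t_c \text{ but the } S\text{-clique completes after the last such edge}\}$, which one shows is empty or of negligible probability.

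The cleaner route, which I would actually pursue: $N_k^*(t_c) - N_k(t_c)$ counts $(k+1)$-sets $S$ that are maximal at some $s \ge t_c$ but not at $t_c$, i.e. $S$ has a cone point $v$ at time $t_c$ (all $k+1$ edges $v$--$S$ present), but later $S$ becomes maximal. Since $v$'s edges to $S$ persist, for $S$ to be maximal at time $s$ we would need $v$ to no longer cone over $S$, contradiction --- \emph{unless} $S$ itself is not a clique at $t_c$. Hence $A$ forces: $S$ is not a clique at $t_c$ (some internal edge $> t_c$) but becomes one at some $s \ge t_c$, and $S$ has a vertex $v$ with all $k+1$ edges $v$--$S$ $\le t_c$. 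So
\[
\sP(A) \le \sP\big(\exists \text{ internal edge of } S \text{ with weight} > t_c\big)\cdot (n-k-1)\,t_c^{\,k+1},
\]
where the last factor is a union bound over the spoiler $v$ and $t_c^{k+1}$ is the probability its $k+1$ edges to $S$ are all $\le t_c$. The first factor is at most $\binom{k+1}{2} t_c^2 \asymp t_c^2$ (actually we want $1 - t_c^{\binom{k+1}{2}}$ but that is $\Theta(1)$, so I should instead keep it as $O(1)$ and rely on a better bound, or — better — observe the spoiler edges and an internal edge being present/absent are on disjoint edge sets, independent). Multiplying by $\binom{n}{k+1} \asymp n^{k+1}$ and using $n t_c^{k+1} \asymp \log n$, the contribution is $O\big(n^{k+1} \cdot t_c^2 \cdot n \cdot t_c^{k+1}\big) = O\big(t_c^2 \cdot n \cdot (\log n)\big)$. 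Since $t_c \asymp (\log n / n)^{1/(k+1)} \to 0$ for $k \ge 1$, we get $t_c^2 \, n \log n \asymp n^{1 - 2/(k+1)} (\log n)^{1 + 2/(k+1)}$, which $\to 0$ precisely when $k = 1$ (exponent $0$, but log factors... need care) and for $k \ge 2$ diverges --- so this crude bound is \emph{not} good enough and must be sharpened. The fix is to not throw away the joint constraint: conditioned on $S$ being maximal at some $s\ge t_c$, internal edges are irrelevant; what matters is that having a spoiler $v$ at $t_c$ is incompatible with maximality-ever, so really $A$ requires $S$ to \emph{not even be a clique in $G(n,1)$} along a path... I would instead directly bound $\sP(A)$ by $\sP(\exists v : v\text{--}S \text{ edges all} \le t_c) \cdot \sP(\exists v' \ne v: \text{the last } v'\text{--}S \text{ edge arrives after } S\text{'s internal clique and is} > t_c)$ and exploit independence across the three disjoint blocks of edges to get an extra decaying factor like $t_c^{k+1}$ or a $1/n$.

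The main obstacle, then, is getting the exponents to balance: the naive union bound over the spoiler vertex costs a factor $n t_c^{k+1} \asymp \log n$, and one needs a \emph{second} decaying factor to beat $\binom{n}{k+1} t_c^{\binom{k+1}{2}}$-type terms and the $\log n$. I expect the resolution is the observation that $A$ actually requires two ``anomalous'' vertices relative to $S$ (one spoiler present at $t_c$, and the configuration that lets $S$ become maximal later despite the spoiler — which forces an internal edge of $S$ to be large, weight in $(t_c, 1]$, on an edge set disjoint from all spoiler edges), so that $\sP(A) \le C\, t_c^{k+1} \cdot (\text{small})$; then $\binom{n}{k+1}\sP(A) \to 0$. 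Concretely I would: (i) prove the deterministic dichotomy ``$S$ maximal at some $s \ge t_c$ and non-maximal at $t_c$ $\Rightarrow$ $S$ is not a clique at $t_c$''; (ii) write $\sP(A) \le \sum_{v} \sP(v\text{--}S \text{ all} \le t_c) \cdot \sP(\text{some internal edge of } S \text{ has weight} > t_c \mid \cdots)$, using independence of the disjoint edge families; (iii) plug in $t_c$ and sum, using $\binom{n}{k+1} \le n^{k+1}/(k+1)!$ and $n t_c^{k+1} = (\tfrac{k}{2}+1)\log n + \tfrac{k}{2}\log\log n + c$, to see the bound is $O((\log n)^{a} n^{-b})$ with $b > 0$; (iv) deduce \eqref{e:cvgnknk*}. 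Finally, since $N_k(t_c) \to \mathrm{Poisson}(\mu(k,c))$ in distribution (a marginal case of \cite[Theorem 1.3]{Fowler2019}, or proved directly by factorial-moment convergence) and $N_k(t_c) \le N_k^*(t_c)$ with $\E[N_k^*(t_c) - N_k(t_c)] \to 0$, Markov's inequality gives $N_k^*(t_c) - N_k(t_c) \to 0$ in probability, hence $N_k^*(t_c) \to \mathrm{Poisson}(\mu(k,c))$ by Slutsky. I would double-check the constant $\mu(k,c) = \frac{(k/2+1)^{k/2}}{(k+1)!} e^{-c}$ by computing $\lim_n \E[N_k(t_c)] = \lim_n \binom{n}{k+1} t_c^{\binom{k+1}{2}} (1 - t_c^{k+1})^{n-k-1}$ and matching.
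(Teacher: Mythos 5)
Your overall strategy — represent $\E[N_k^*(t_c)-N_k(t_c)]$ as $\binom{n}{k+1}\sP(A)$ for a single $(k+1)$-set $S$, characterize when $A$ can occur, bound the probability, then deduce the Poisson law for $N_k^*$ from that of $N_k$ by Slutsky — is indeed the paper's route.  However, your characterization of $A$ contains an error that propagates through the estimate.  You assert that $A$ (``$S$ maximal at some $s\ge t_c$ but not maximal at $t_c$'') requires a \emph{spoiler} vertex $v\notin S$ with all $k+1$ edges to $S$ present at time $t_c$.  That is wrong: ``$S$ not maximal at $t_c$'' means either (i) $S$ is not even a clique at $t_c$, or (ii) $S$ is a clique at $t_c$ but has a cone vertex.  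Case (ii) is the ``spoiler'' case and, as you yourself observe, it is fatal — a cone vertex persists, so $S$ can never become maximal later — hence case (ii) is \emph{disjoint} from $A$.  The only surviving case is (i): some internal edge of $S$ has weight in $(t_c,1]$, and $S$ becomes maximal at the time $s'=\max_{i<j\le k+1}U(i,j)$ when its last internal edge arrives (which requires all external vertices to have max-edge weight to $S$ exceeding $s'$).  No spoiler at $t_c$ is part of the picture; in fact having one would make $A$ impossible.  Your later ``hence $A$ forces \dots\ and $S$ has a vertex $v$ with all $k+1$ edges $\le t_c$'' is therefore a misstatement, and the factor $(n-k-1)\,t_c^{k+1}\asymp \log n$ that you then hoped would supply extra decay does not exist.

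The consequence is that your claimed rate $O\bigl((\log n)^a n^{-b}\bigr)$ with $b>0$ is not achievable — and no wrangling of the bound you set up will reach it, because the true order of $\E[\hat N_k(t_c)]$ is roughly $1/\log n$ (one can see this by substituting $u=ns^{k+1}$ in the exact expression).  Concretely, the correct starting point is
\[
\E[\hat N_k(t_c)] \;\le\; C\,n^{k+1}\int_{t_c}^{1} s^{\binom{k+1}{2}-1}\,e^{-ns^{k+1}}\,\mathrm{d}s ,
\]
obtained by conditioning on which internal edge of $S$ achieves the maximum weight $s'=U(i,j)$ and integrating over $s'\in(t_c,1]$.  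This integral does \emph{not} decay polynomially: when evaluated at $t_c$ the boundary term produces an $e^{-nt_c^{k+1}}\sim n^{-(k/2+1)}(\log n)^{-k/2}e^{-c}$ that exactly cancels the prefactor $n^{k+1}\cdot t_c^{\binom{k+1}{2}}$-type growth, and what remains after a careful three-region split of $[t_c,1]$ (inner window $[t_c,t_{c_n}]$ with $c_n\to\infty$ slowly, intermediate window up to $t'_n$, and tail) is of order $\max\{\log(t_{c_n}/t_c),\,e^{-c_n}\}\to 0$, plus an $O(n^{-1})$ tail.  This delicate splitting is the real content of the proposition and is precisely what your plan, having introduced the spurious $\log n$-sized spoiler factor, would miss.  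The final Slutsky/Poisson step you sketch is fine once \eqref{e:cvgnknk*} is in hand.
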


 \begin{proof}
We shall only prove \eqref{e:cvgnknk*} as the Poisson convergence follows from \eqref{e:cvgnknk*},  Poisson convergence of $N_k(t_c(k,n))$ \cite[Theorem 2.3]{SVT} (see also \cite[Lemma 7.1]{Fowler2019}) and Slutsky's theorem.  

From now on,  we abbreviate $t_c(k,n)$ by $t_c$ and drop the subscript $k$ for convenience.  Let $t \geq t_c$.  Further set $\hN(t) := N^*(t) - N(t)$.   Thus to prove \eqref{e:cvgnknk*},  it suffices to show that
\begin{equation}
\label{e:cvghnk}
\lim_{n \to \infty} \E[\hN(t_c)] = 0.
\end{equation} 
We can represent $\hN(t)$ as
\begin{align*}
 \hN(t) &:=  \frac{1}{(k+1)!} \sum^{\neq}_{i_1,\ldots,i_{k+1}} \1[ \mbox{$i_1,\ldots,i_{k+1}$ do not form a clique in $G(n,t)$}]  \\
 & \quad \quad \times \1[ \mbox{$i_1,\ldots,i_{k+1}$ form a maximal clique in $G(n,s)$ for some $s > t$}].
 \end{align*}
By the construction of $G(n,t)$ via the weights $U(i,j)$,  we re-write the above events.  We will do so assuming $i_1 = 1,\ldots,i_{k+1} = k+1$ for notational convenience. 
\begin{align*}
\{\mbox{$1,  \ldots , k+1$ do not form a clique in $G(n,t)$}\} & = \{ \max_{l, m = 1,\ldots,k+1} U(l,m) > t \},  \\
\{  \mbox{$1,  \ldots , k+1$ form a maximal clique in $G(n,s)$} \} &= \{  \max_{l, m = 1,\ldots,k+1} U(l,m) \leq s \} \\
& \quad \quad \cap \bigcap_{j > k+1} \{\max_{1 \leq l \leq k+1} U(j,l) > s\} .
\end{align*}
From the above identities,  note that if $1,\ldots,k+1$ form a maximal clique in $G(n,s)$ for some $s$,   then necessarily they form a maximal clique in $G(n,s)$ for $s' = \max_{l, m = 1,\ldots,k+1} U(l,m)$.  Using this observation and combining the above events, 
we have that 
\begin{align*}
& \{ \mbox{$1,  \ldots , k+1$ do not form a clique in $G(n,t)$ but form a maximal clique in $G(n,s)$ for some $s > t$}\} \\
& = \bigcup_{1 \leq i \neq j \leq k+1} \left(  \{ U(i,j) > t,  \max_{l, m = 1,\ldots,k+1} U(l,m) \leq U(i,j) \} \bigcap \bigcap_{p > k+1} \{ \max_{l =1, \ldots,  k+1} U(p,l) > U(i,j) \} \right) \\
& =:  \bigcup_{1 \leq i \neq j \leq k+1}A(i,j).  
\end{align*}
We have from the definition of $\hN(t)$,  the above identity,  union bound and exchangeability that 
\begin{equation}
\label{e:unionbdhnt}
\E[\hN(t)]  \leq \binom{n}{k+1}\binom{k+1}{2} \sP(A(1,2)).
\end{equation}
By conditioning on $U(1,2)$ and then using the independence of $U(\cdot,\cdot)$'s,  we have that
$$\sP(A(1,2)) = \int_{t}^{1} s^{\binom{k+1}{2} - 1}(1 - s^{k+1})^{(n-k-1)}ds.$$
Now by substituting the above probability in \eqref{e:unionbdhnt} and using some elementary bounds,  we obtain that for a constant $C$ depending on $k$,  
$$ \E[\hN(t)] \leq Cn^{k+1}\int_{t}^{1}s^{\binom{k+1}{2} - 1}e^{-ns^{k+1}}\md s.$$
Now using that  $s^{\binom{k+1}{2}} = s^{k(k+1)/2}$ is increasing in $s$ and $s^{-1}e^{-ns^{k+1}}$ is decreasing in $s$,   we can derive the following bounds. Let $c_n \to \infty$ such that $c_n = o(\log \log n)$ and set $t'_n := (\frac{(k+2)\log n}{n})^{1/(k+1)}$. 
\begin{align*}
& \E[\hN(t_c)]  \leq C  \int_{t_c}^{t_{c_n}} n^{\frac{k}{2}+1}(ns^{k+1})^{k/2} s^{-1}e^{-ns^{k+1}} \md s +  C \int_{t_{c_n}}^{t'_n} n^{\frac{k}{2}+1}(ns^{k+1})^{k/2} s^{-1}e^{-ns^{k+1}} \md s  \\
 & \quad  +  C \int_{t'_n}^1 n^{k+1} e^{-ns^{k+1}}\md s  \\
& \leq C n^{\frac{k}{2}+1}(nt_{c_n}^{k+1})^{k/2}e^{-nt_c^{k+1}}  \int_{t_c}^{t_{c_n}}s^{-1} \md s +  C n^{\frac{k}{2}+1}((k+2)\log n)^{\frac{k}{2}}e^{-nt_{c_n}^{k+1}}  \int_{t_{c_n}}^{t'_n}  s^{-1} \md s + Cn^{-1} \\
& \leq C e^{-c}\bigg( \frac{k}{2}+1 + \frac{k}{2}\frac{\log \log n}{\log n} + \frac{c_n}{\log n}\bigg)^{k/2} \log(\frac{t_{c_n}}{t_c}) +  C e^{-c_n} (k+2)^{k/2} \log(\frac{t'_n}{t_{c_n}}) + Cn^{-1} \\
& \leq C_{k,c}\bigg( \log(\frac{t_{c_n}}{t_c})  + e^{-c_n}  \log(\frac{t'_n}{t_{c_n}})  \bigg) + Cn^{-1}. 
\end{align*}
Now,  as $n \to \infty$,  $t_{c_n}/t_c \to 1$,  $c_n \to \infty$,  $\frac{t'_n}{t_{c_n}} \to 2^{1/(k+1)}$ and so $\E[\hN(t_c)] \to 0.$
\end{proof}

\begin{theorem} 
\label{thm:isolatedprocess}
Let $k \geq 1$.  For $c \in \BR$,   we set $t_c := t_c(k,n) = \big(\frac{(\frac{k}{2}+1)\log n + \frac{k}{2} \log \log n +c}{n}\big)^{\frac{1}{k+1}}$.   Then we have that 
$$\{N_k(t_c)\}_{c \in \BR} \overset{d}{\to} \{\cP(c)\}_{c \in \BR},$$
where $\{\cP(c)\}_{c \in \BR}$  is a Poisson process on $\BR$ with intensity measure $\mu(k,x) \, \md x$ and convergence $\overset{d}{\to}$ as in Theorem \ref{thm:main}. 
\end{theorem}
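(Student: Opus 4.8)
The plan is to deduce Theorem~\ref{thm:isolatedprocess} from Proposition~\ref{prop:van_isol_faces} in two stages: first replace $N_k$ by the \emph{monotone} counting process $N_k^*$, and then upgrade the one-point Poisson limit of $N_k^*$ to a convergence of point processes, by localising to level-windows the first- and second-moment estimates already behind Proposition~\ref{prop:van_isol_faces} and \cite[Theorem 2.3]{SVT}. Tightness never has to be argued separately: point-process convergence lives in a Polish space and the stated $D$-convergence will follow by a continuous mapping.

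\textbf{Step 1: reduction to $N_k^*$.} Set $\hat N_k(t):=N_k^*(t)-N_k(t)\ge 0$ (the quantity $\hat N$ in the proof of Proposition~\ref{prop:van_isol_faces}), and use its representation there. For a fixed $(k+1)$-tuple the summand $\1[\text{not a clique at }t]\cdot\1[\text{maximal at some }s>t]$ is a product of two non-negative indicators, each non-increasing in $t$; hence $\hat N_k(t)$ is non-increasing in $t$, i.e.\ $\hat N_k(t_c)$ is non-increasing in $c$. Therefore, for every $c_0\in\BR$,
\[
\sup_{c\ge c_0}\bigl(N_k^*(t_c)-N_k(t_c)\bigr)=\hat N_k(t_{c_0}),
\]
which tends to $0$ in $L^1$ by \eqref{e:cvghnk}. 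Since $c_0$ is arbitrary, $\{N_k(t_c)\}_c$ and $\{N_k^*(t_c)\}_c$ have the same weak limit in the Skorohod space (if either converges), so it suffices to prove the statement with $N_k$ replaced by $N_k^*$.

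\textbf{Step 2: a point-process reformulation.} For a $(k+1)$-set $S\subseteq[n]$ let $c^*(S)$ be the unique real with $t_{c^*(S)}(k,n)=\max_{\{i,j\}\subseteq S}U(i,j)$, and call $S$ \emph{eventually maximal} if it is a maximal clique of $G(n,t_{c^*(S)})$. As observed in the proof of Proposition~\ref{prop:van_isol_faces}, $S$ is a maximal clique of $G(n,s)$ for some $s\ge t_c$ iff $S$ is eventually maximal and $c^*(S)\ge c$; hence, with
\[
\Xi_n:=\sum_{\substack{|S|=k+1\\ S\text{ eventually maximal}}}\delta_{c^*(S)},
\]
we have $N_k^*(t_c)=\Xi_n([c,\infty))$, which (a.s.\ having no atom at a prescribed point) coincides for each $c$ with the right-continuous functional $c\mapsto\Xi_n((c,\infty))$. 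So it suffices to show $\Xi_n\to\Xi$ in the vague topology --- restricted to $[c_0,\infty)$ for each $c_0$, as $\mu(k,\cdot)$ is not integrable near $-\infty$ --- where $\Xi$ is the Poisson point process on $\BR$ with intensity $\mu(k,x)\,\md x$. The map sending a locally finite simple point measure (with distinct unit atoms and no atom at the boundary) to its right-tail counting function is continuous at $\Xi$ almost surely, so by the continuous mapping theorem $\{N_k^*(t_c)\}_c\overset{d}{\to}\{\cP(c)\}_c$ in $D$; combined with Step 1 this proves the theorem.

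\textbf{Step 3: Poisson point process convergence, and the main obstacle.} By the standard criterion for convergence to a Poisson process (Kallenberg) it is enough to verify, for every $B$ in the ring generated by bounded intervals $(a,b]$, that (i) $\E[\Xi_n(B)]\to\int_B\mu(k,x)\,\md x$ and (ii) $\sP(\Xi_n(B)=0)\to\exp(-\int_B\mu(k,x)\,\md x)$. For (i), exchangeability gives
\[
\E[\Xi_n((a,b])]=\binom{n}{k+1}\binom{k+1}{2}\int_{t_a}^{t_b}s^{\binom{k+1}{2}-1}(1-s^{k+1})^{\,n-k-1}\,\md s,
\]
which is exactly the first-moment integral behind $\E[N_k(t_c)]\to\mu(k,c)$ in \cite[Theorem 2.3]{SVT}, now cut down to the level-window $(t_a,t_b]$; the same elementary estimates give the limit $\mu(k,a)-\mu(k,b)=\int_a^b\mu(k,x)\,\md x$. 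For (ii), $\Xi_n(B)=\sum_S\1[S\text{ eventually maximal},\,c^*(S)\in B]$ is a sum of indicators whose dependency is controlled solely by vertex-sharing, and the Chen--Stein correction terms $b_1=\sum_S\sum_{S'\cap S\neq\emptyset}p_Sp_{S'}$ and $b_2=\sum_S\sum_{S'\neq S,\,S'\cap S\neq\emptyset}\E[\1_S\1_{S'}]$ tend to $0$ by precisely the counting arguments used for $N_k(t_c)$ in \cite[Theorem 2.3]{SVT} and \cite[Lemma 7.1]{Fowler2019}, windowing changing nothing there (each $S$ carries a single, edge-weight-measurable value $c^*(S)$, so it lands in exactly one window). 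I expect this to be the only real work: one must re-run the dependency bounds of \cite{SVT,Fowler2019} carrying the extra constraint $c^*(S)\in B$ --- careful bookkeeping rather than a new idea --- and the same estimates simultaneously deliver the asymptotic independence of $\Xi_n$ over disjoint windows, i.e.\ the independent-increments structure of $\cP$. The genuinely new ingredient over the cited works is the elementary monotonicity observation of Step 1.
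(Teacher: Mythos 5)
Step 1 is sound and gives a clean, uniform-in-$c$ version of the paper's observation that $\E[N_k^*(t_c)-N_k(t_c)]\to 0$, via a one-line monotonicity argument. The genuine gap is in Step 2. You mark a $(k+1)$-set $S$ by the level $c^*(S)$ at which it first \emph{becomes} a clique, $t_{c^*(S)}=\max_{\{i,j\}\subseteq S}U(i,j)$, but the claimed equivalence ``$S$ is maximal at some $s\geq t_c$ iff $S$ is eventually maximal and $c^*(S)\geq c$'' fails in the ``only if'' direction: a set $S$ that is already a clique at time $t_c$ (so $c^*(S)<c$) but stays maximal past $t_c$ is counted by $N_k^*(t_c)$ yet not by $\Xi_n([c,\infty))$, and such sets carry essentially all the Poisson mass. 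With your marking, $\Xi_n((c,\infty))$ is precisely $\hat{N}_k(t_c)$ --- the cliques that are born and are instantaneously maximal only after $t_c$ --- which \eqref{e:cvghnk} shows tends to $0$ in $L^1$. So $\Xi_n$ converges to the empty point process, not to $\cP$, and the identity $N_k^*(t_c)=\Xi_n([c,\infty))$ does not hold.

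The same error propagates into Step 3. Your integral formula is correct for your $\Xi_n$, but it is not ``the first-moment integral behind $\E[N_k(t_c)]$'': the latter is the point formula $\binom{n}{k+1}t_c^{\binom{k+1}{2}}(1-t_c^{k+1})^{n-k-1}$ with the survival factor frozen at $t_c$, whereas your integrand carries the running $(1-s^{k+1})^{n-k-1}$. In fact $\E[\Xi_n((a,b])]=\E[\hat{N}_k(t_a)]-\E[\hat{N}_k(t_b)]\to 0$, not $\mu(k,a)-\mu(k,b)$; carrying out the asymptotics picks up an extra $(\log n)^{-1}$. The repair is to mark $S$ by the $c$-coordinate of its \emph{vanishing} level $\min_{j\notin S}\max_{i\in S}U(j,i)$ (the quantity $\min_{j\notin\sigma}\bU(j,\sigma)$ entering the paper's $N'(I)$). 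With that marking one has $\Xi_n([c,\infty))=N_k^*(t_c)$ almost surely, the one-dimensional intensities match $\mu(k,\cdot)\,\md x$, and the rest of your plan --- Step 1, the continuous-mapping reduction, and Chen--Stein (or the factorial-moment argument the paper runs for $N'(I)$) --- would then give a proof parallel to the paper's.
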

\begin{proof}
Firstly, we explain that it suffices to prove convergence of the associated point process for random counting measures and then reduce it to Poisson convergence for suitable random variables. Lastly, we shall establish Poisson convergence by a factorial moment argument. We shall outline the main steps but will exclude some derivations and instead point the reader to suitable references for similar calculations. The first step above is standard in weak convergence of pure jump processes, the second step will be based on well-known criteria for convergence of point processes and approximation as in Proposition \ref{prop:van_isol_faces}. The last step involves computing factorial moments similar to those in \cite[Proposition 33]{STY20} and \cite[Lemma 7.1]{Fowler2019}. Here we shall only mention the key points of the computation.

Let $M_p(\BR)$ be the space of locally-finite (Radon) counting measures on $\BR$ equipped with the topology of vague convergence; see \cite[Chapter 3]{Res13} for details. Since $N_k(t_c)$ is a pure jump process, we can represent $N_k(\cdot), c \in \BR$ as an element of $M_p(\BR)$ by considering the time of jumps i.e., for a Borel measurable set $A \subset \BR$, 
\[ N_k(A) = \sum_{c \in A} \1[N_k(t_c) \neq N_k(t_c-)], \]
and similarly $\cP(\cdot)$ can also be represented as a random Radon counting measure. Due to continuous mapping theorem (see the proof of Part I of Theorem 2.2 in \cite{owada2018limit}), it suffices to prove weak convergence of $N_k$ in $M_p(\BR)$ i.e.,
\[ N_k \Rightarrow \cP \, \, \mbox{in} \, \, M_p(\BR).\]

From the above representation, $N_k((c,\infty))$ counts the number of isolated faces at $t_c$ and the isolated faces that are formed after $t_c$ are counted twice - once when they are created and once when they become non-isolated.  Thus we have that
\[ N_k((c,\infty)) = N_k(t_c) + 2\hat{N}_k(t_c) .\]
Since \cite[Lemma 7.1]{Fowler2019} gives that $\E[N_k(t_c)] \to \E[\cP(c)]$ for all $c \in \BR$, we can immediately obtain from Proposition \ref{prop:van_isol_faces} that $\E[N_k(I)] \to \E[\cP(I)]$ for all $I$ which are finite-unions of disjoint intervals.  Since the intensity measure of $\cP$ is non-atomic, to complete the proof of weak convergence in $M_p(\BR)$ it suffices to show that
\begin{equation}
\label{e:voidprobcvg}
\sP(N_k(I) = 0) \to \sP(\cP(I) = 0) = e^{-\mu(k,I)} \, \, \mbox{as} \, \, n \to \infty,
\end{equation}
for all $I$ which are finite-unions of disjoint intervals and $\mu(k,I) := \int_I \mu(k,x) \, \md x$ ; see \cite[Proposition 3.22]{Res13}. 

Let $I = \cup_{j=1}^l (a_{2j-1},a_{2j}]$ for $-\infty< a_1 < \ldots < a_{2l} < \infty$.  Fix $c \in (-\infty, a_1)$. Define $\bar{u} = nu^{k+1} - (\frac{k}{2}+1) \log n - \frac{k}{2} \log \log n$ for $u \in [0,1]$ and $n \geq 3$. We omit $n$ from argument of $\bar{u}$ for convenience.  Define
\begin{align*}
N'(I) & :=  \frac{1}{(k+1)!} \sum^{\neq}_{i_1,\ldots,i_{k+1}} \1[ \mbox{$i_1,\ldots,i_{k+1}$ form a maximal clique in $G(n,t_c)$} \\ 
& \quad \times \1[\min_{j \neq i_1,\ldots, i_k+1} \max \{\bU(j,i_1), \ldots, \bU(j,i_{k+1}) \} \in I], 
\end{align*}
which is nothing but the number of maximal cliques in $G(n,t_c)$ that vanish in $I$.  Setting $\bU(j,\sigma) = \max \{ \bU(j,i) : i \in \sigma \}$, we re-write $N'(I)$ as
$$ N'(I) =  \sum_{\sigma \in \mathcal{K}_{k-1}(n)} \1[ \mbox{$\sigma$ is a clique in $G(n,t_c)$}] \1[\min_{j \notin \sigma} \bU(j,\sigma) \in I],$$
where we are using that if $\min_{j \notin \sigma} \bU(j,\sigma) \in I$ then $\sigma$ has to be maximal in $G(n,t_c)$. Since $\hat{N}(t_c)$ is asymptotically $0$ by Proposition \ref{prop:van_isol_faces}, there are no new maximal cliques created and hence asymptotically the only change in the process $N_k$ is due to vanishing of maximal cliques in $G(n,t_c)$. Thus \eqref{e:voidprobcvg} follows if we show that
 $$N'(I) \overset{d}{\to} \cP(I) \, \, \mbox{as} \, \, n \to \infty,$$
where $\overset{d}{\to}$ is standard convergence in distribution for random variables.  The same can be done via a factorial moment computation \cite[Theorems 2.4 and 2.5]{vdH2017}. For $I = (c,\infty)$, this follows from \cite[Lemma 7.1]{Fowler2019} and Proposition \ref{prop:van_isol_faces}. We only sketch the details of the generic case here.

For notational convenience, we represent $(k+1)$-tuples or $k$-faces by $\sigma$ and we shall fix the standard total ordering on $[n]$ for convenience. Also set $k^{(r)} = k(k-1)\ldots(k-r+1)$ as the $r$th factorial power of $k$ for $k > r$. Under such a notation, the $r$th factorial of $N'(I)$ can be written as 
$$N'(I)^{(r)} = \sum^{\neq}_{\sigma_1,\ldots,\sigma_r} \prod_{i=1}^r \1[ \mbox{$\sigma_i$ is a clique in $G(n,t_c)$}] \1[\min_{j \notin \sigma_i} \bU(j,\sigma_i) \in I], $$ 
and so its $r$th factorial moment is
$$ \E[N'(I)^{(r)}] =  \sum^{\neq}_{\sigma_1,\ldots,\sigma_r} \sP\Big( \{ \mbox{$\sigma_i$'s are cliques in $G(n,t_c)$} \} \cap \{ \min_{j \notin \sigma_i} \bU(j,\sigma_i) \in I \} \Big),$$
We need to show that as $n \to \infty$, $\E[N'(I)^{(r)}] \to  \E[\cP(I)^{(r)}] = \mu(k,I)^r$.  This is similar to the proofs of \cite[Proposition 33]{STY20} and \cite[Lemma 7.1]{Fowler2019}. Suppose we abbreviate $ \1[ \mbox{$\sigma_i$ is a clique in $G(n,t_c)$}] \1[\min_{j \notin \sigma_i} \bU(j,\sigma_i) \in I]$ by $\1[\sigma;I]$ for any set $I$, then observe that
$$\prod_{j=1}^r\1[\sigma_j; I] =  \sum_{\alpha_1,\ldots,\alpha_r \in \{1,\ldots,2l\}^r } \prod_{j=1}^r(-1)^{\alpha_j+1} \1[\sigma_j;(a_{\alpha_j},\infty)].$$
We use $\sum^*_{\sigma_1,\ldots,\sigma_r}$ to denote that the $\sigma_i$'s are disjoint i.e., have no common vertices. In this case, we can derive using independence of the indicators inside the sum that
$$\sum^*_{\sigma_1,\ldots,\sigma_r} \sE[\prod_{j=1}^r\1[\sigma_j;(a_{\alpha_j},\infty)]] = \prod_{j=1}^{r} \binom{n - (j-1)(k+1)}{k+1}t_c^{\binom{k+1}{2}}(1 - \sP(\bU \leq a_{\alpha_j})^{k+1})^{n-k-1} \big(1 + o(1) \big),$$
where $U$ is a Uniform $[0,1]$ random variable and the $o(1)$ term is due to the fact that the edge-weights between the vertices of $\sigma_i$ also satisfy the requisite condition. From this one can derive that
$$ \lim_{n \to \infty} \sum^*_{\sigma_1,\ldots,\sigma_r} \sE[\prod_{j=1}^r\1[\sigma_j;(a_{\alpha_j},\infty)]] = \prod_{j=1}^r\mu(k,(\alpha_j,\infty)),$$
and now summing over $\alpha_i's$, we derive that

$$\lim_{n \to \infty} \sum^*_{\sigma_1,\ldots,\sigma_r} \sE[\prod_{j=1}^r\1[\sigma_j;I]] = \mu(k,I)^r.$$
To complete the proof, one needs to show that summation over $\sigma_i$'s that are not disjoint do not contribute asymptotically. We skip this part as it is similar to that in the proof of \cite[Lemma 7.1]{Fowler2019}.

\end{proof}

\subsection{From isolated faces to Betti numbers - Theorem \ref{thm:bknkapprox}.}
\label{s:frombknk}

The bulk of our work is in relating the process of isolated nodes to that of Betti numbers and this is our main theorem that helps us to do so.   
\begin{theorem}
\label{thm:bknkapprox} Let $k \geq 1$ and $t_c :=t_c(k,n) =  \big(\frac{(\frac{k}{2}+1)\log n + \frac{k}{2}\log \log n + c}{n}\big)^{\frac{1}{k+1}}$ for $n \geq 3,  c \in \BR$.   Then we have that as $n \to \infty$,  
\begin{equation}
\label{e:bknkallt}
\lim_{n \to \infty} \sP\Big( \bigcup_{t_c \leq t \leq 1} \{ \beta_k(X(n,t)) \neq N_k(X(n,t)) \} \Big) \to 0.
\end{equation}
%
%where $t_n = t_{c_n}$ for some $c_n \to \infty$.    
\end{theorem}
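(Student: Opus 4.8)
The plan is to control the event $\{\beta_k(X(n,t)) \neq N_k(X(n,t))\}$ simultaneously over $t \in [t_c,1]$ by combining a Borel--Cantelli-type discretization argument with the deterministic lemmas of Section~\ref{s:prelims} and Garland's criterion (Theorem~\ref{thm:garland}). The first observation is that $\beta_k(X(n,t))$ and $N_k(X(n,t))$ are both pure jump processes in $t$, changing value only at the finitely many edge-weights $U(i,j)$. Hence the union over the continuum $t \in [t_c,1]$ in \eqref{e:bknkallt} is really a union over a random but finite set of times, and by right-continuity it suffices to control the discrepancy just before and just after each jump time; more precisely, one reduces \eqref{e:bknkallt} to bounding $\sP(\beta_k(X(n,t)) \neq N_k(X(n,t)))$ integrated/summed appropriately, which is the strategy flagged in the Proof Outline. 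Concretely, I would partition $[t_c,1]$ dyadically (or into $O(n^{k+1})$-many subintervals fine enough that with high probability at most one edge weight falls in each), estimate the probability of a discrepancy on each piece, and sum; the estimates must be summable in $n$ after this union, which is exactly why the probability bounds need to be quantified carefully rather than just shown to vanish.

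The deterministic engine is Lemmas~\ref{l:lkX'conn}--\ref{l:xtox'}: writing $X' = X \setminus \Sigma$ for $X = X(n,t)$ with $\Sigma$ its set of maximal $k$-faces ($|\Sigma| = N_k(t)$), Lemma~\ref{l:bettikX'X} shows $\beta_k(X) > N_k(t) \Rightarrow \beta_k(X') \geq 1$ and Lemma~\ref{l:xtox'} shows $\beta_k(X) < N_k(t) \Rightarrow \beta_{k-1}(X') \geq 1$. So $\{\beta_k(X(n,t)) \neq N_k(X(n,t))\} \subseteq \{\beta_k(X') \geq 1\} \cup \{\beta_{k-1}(X') \geq 1\}$, and it suffices to bound the probability that $X'$ has nontrivial cohomology in degrees $k-1$ or $k$. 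For this I apply Garland's criterion to $X'$: by Lemma~\ref{l:lkX'conn}(2), a $(k-1)$-face link $\mathrm{lk}_{X'}(\tau)$ is connected iff $\mathrm{lk}_X(\tau)$ consists of one large component plus isolated vertices, so the bad event is covered by (a) some $(k-1)$-face link in $X$ being disconnected beyond having isolated vertices, or (b) some link in $X'$ having spectral gap $\lambda_2 \leq 1 - \tfrac{1}{k+1}$ (and analogously in degree $k-1$, one also needs to handle the empty-$\Sigma$ case, where $X' = X$ and one uses the already-established vanishing of $H^{k-1}$ above the relevant threshold from \cite{SVT}). The spectral gap input is the theorem of \cite{SPEC}: the link of a $(k-1)$-face in $G(n,t)$ is an Erd\H{o}s--R\'enyi graph on roughly $n$ vertices with edge probability $t^{k+1} \asymp \tfrac{\log n}{n}$, which is well above the connectivity threshold for all $t \geq t_c$, and there the second eigenvalue of the normalized Laplacian is $1 + o(1)$ with very high probability, comfortably exceeding $1 - \tfrac{1}{k+1}$.

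The main obstacle — and the place where all the work goes — is making the probability bounds quantitative and uniform enough to survive the union over $t$ and the sum over $n$. One must show that, with failure probability summable in $n$, \emph{every} $(k-1)$-face link of $X(n,t)$ for \emph{every} $t \in [t_c,1]$ simultaneously has the one-giant-component-plus-isolated-vertices structure and a spectral gap above $1 - \tfrac{1}{k+1}$; a crude union bound over the $\binom{n}{k}$ faces, the $O(n^{k+1})$ jump times, and the spectral-gap large-deviation estimate of \cite{SPEC} must close. This forces one to track the dependence of the \cite{SPEC} bound on the number of vertices and edge probability explicitly, and to exploit that the relevant edge probability $t^{k+1}$ ranges over $[\,\asymp\tfrac{\log n}{n},\,1]$, so the link graphs are increasingly dense and well-connected as $t$ grows — the binding constraint is at the left endpoint $t = t_c$. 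I would organize this as: (i) reduce to finitely many times via the jump-process structure and right-continuity; (ii) on each time, reduce the discrepancy event to $\{\beta_{k-1}(X') \vee \beta_k(X') \geq 1\}$ via the Section~\ref{s:prelims} lemmas; (iii) bound that via Garland, splitting into a ``link connectivity'' part (Erd\H{o}s--R\'enyi connectivity estimates, which also underlie Proposition~\ref{prop:van_isol_faces}) and a ``spectral gap'' part (\cite{SPEC}); (iv) collect the estimates, check summability in $n$ of the resulting bound after the union over times, and conclude. Steps (iii)--(iv) are where the careful quantification of the arguments of \cite{SVT,Fowler2019} — advertised in the Proof Outline as the technical heart — is needed; the rest is bookkeeping.
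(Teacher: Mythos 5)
Your high-level architecture is the right one and matches STEPS 2--4 of the paper's proof: use the deterministic Lemmas~\ref{l:lkX'conn}--\ref{l:xtox'} to reduce $\{\beta_k(X(n,t)) \neq N_k(X(n,t))\}$ to $\{\beta_{k-1}(X') \vee \beta_k(X') \geq 1\}$, then apply Garland's criterion (Theorem~\ref{thm:garland}) to $X'$, splitting the failure into a ``link-connectivity'' event and a ``spectral-gap'' event handled by Theorem~\ref{thm:specgapER}. However, there are two places where the proposal genuinely diverges from what works.

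First, the concrete mechanism for taking the simultaneous union over $t\in[t_c,1]$ --- partitioning into $O(n^{k+1})$ subintervals and union-bounding --- does not close numerically. Even the sharpest pointwise bound, attained at $t=t_c$ and dominated by the spectral-gap term, is of order $n^{-(k^2+5k-1)/(2(k+1))}$ up to logarithms; multiplying by $n^{k+1}$ pieces leaves an exponent $\tfrac{k^2-k+3}{2(k+1)}>0$ for all $k\geq 1$, so the union bound diverges. (Moreover, guaranteeing at most one of the $\binom{n}{2}$ edge weights per subinterval with high probability already requires $\omega(n^4)$ pieces, not $O(n^{k+1})$.) The paper uses a different reduction in its STEP 1: it sets $Z_n=\int_{t_c}^1 \1[\text{discrepancy at }t]\,\md t$, observes $\1[Z_n>0]=\1[A_n]$ by piecewise-constancy of the integrand, and feeds $\sum_n \E[Z_n]<\infty$ into a Markov/Borel--Cantelli argument. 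The integral $\int_{t_c}^1 P(t)\,\md t$ picks up an extra factor $\asymp t_c$ relative to $n^{k+1}\max_t P(t)$, and that factor is exactly what makes the bound summable in $n$.

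Second, the degree-$(k-1)$ side (the paper's $Q(t)$, STEPS 5--6) is where the real technical work is, and ``analogously in degree $k-1$'' glosses over the key difficulty: once maximal $k$-faces are deleted, $\lk_\sigma(X')$ is \emph{not} an Erd\H{o}s--R\'enyi graph, so one cannot apply Theorem~\ref{thm:specgapER} to it directly. The paper gets around this by (i) conditioning a priori on $N_k(t_c)\leq m$, $\hat{N}_k(t_c)=0$ and $R^*_{k-1,m}(t_c)=0$ in STEP 1 so that only boundedly many faces are ever removed, and then (ii) comparing $\lambda_2(\lk_\sigma(X'))$ to $\lambda_2(\lk_\sigma(X))$ via the Wielandt--Hoffman inequality plus a Bernstein estimate (STEP 5), and bounding the probability that $\lk_\sigma(X')$ disconnects by the probability that the genuine Erd\H{o}s--R\'enyi link $\lk_\sigma(X)$ is not $m$-connected (STEP 6). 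This conditioning and the $m$-connectivity argument --- which the paper flags in Remark~\ref{rem:gap_approx} as fixing a gap in \cite{Fowler2019} --- do not appear in your outline, and the ``empty-$\Sigma$ / use $H^{k-1}(X)$'' fallback you suggest does not address the non-trivial case. A small factual slip along the way: the link of a $(k-1)$-face is $G(N_\sigma,t)$ with $N_\sigma\sim\mathrm{Binomial}(n-k,t^k)$ vertices and edge probability $t$, not $t^{k+1}$; it is the \emph{mean degree} $\approx nt^{k+1}\asymp\log n$ that sits near the connectivity threshold.
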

From Proposition \ref{prop:van_isol_faces},  we know that the probability that $X(n,t)$ is a pure $(k+1)$-dimensional complex for all $t \geq t_c$ i.e.,  $X(n,t)$ has finitely many (even though random) isolated $k$-faces. So thanks to Lemmas \ref{l:bettikX'X} and \ref{l:xtox'}, to prove Theorem~\ref{thm:main} we need to show triviality of random complexes obtained by removing these finitely many isolated faces. After removing these isolated faces, the random complex is a pure $(k+1)$-dimensional complex and we shall aim to show triviality of these random complexes via the Garland's method (Theorem \ref{thm:garland}). This requires verification of a certain spectral gap condition. An important ingredient for such a verification is the following spectral gap result for \ER random graphs.
\begin{theorem} (\cite[Theorem 1.1]{SPEC}) 
\label{thm:specgapER}
Fix $\delta > 0$ and let $p > \frac{(\frac{1}{2} + \delta)\log n}{n}$.  Let $d = p(n-1)$ denote the expected degree of a vertex. For every $\epsilon >0$, there is a constant $C = C(\delta,\epsilon)$, so that $\sP (|\lambda_2(\tilde{G}(n,p)) - 1|> \frac{C}{\sqrt{d}}) \leq Cn \exp (-(2-\epsilon)d) + C \exp (-d^{\frac{1}{4}} \log n)$, where $\tilde{G}$ represents the giant component of the graph $G$ i.e.,  the largest component.  
\end{theorem}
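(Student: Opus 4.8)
The final statement in the excerpt is Theorem~\ref{thm:specgapER}, quoted verbatim from \cite[Theorem 1.1]{SPEC}. Since the present paper uses it as an external input rather than reproving it, what follows is a sketch of how one proves such a spectral-gap concentration bound for the giant component of an \ER\ graph in the regime $p \gtrsim \tfrac{\log n}{n}$; this is the argument of \citet{SPEC}, recalled here in outline.

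\textbf{Overall strategy.} The plan is to reduce the statement about $\lambda_2(\tilde G(n,p))$ --- the second eigenvalue of the normalized Laplacian of the giant component --- to a statement about the adjacency operator of a closely related graph, and then to apply a combinatorial trace/moment bound together with a careful treatment of the low-degree vertices that are the real source of fluctuations in this sparse regime. Concretely: (i) restrict attention to the $2$-core or to the subgraph induced by vertices of degree $\geq d/2$, call it $G'$, so that $G'$ has good expansion and minimum degree comparable to $d$; (ii) on $G'$, show that the nontrivial eigenvalues of $I - D^{-1/2}AD^{-1/2}$ lie in a window of width $O(1/\sqrt d)$ around $1$ by bounding $\|D^{-1/2}AD^{-1/2} - \tfrac{1}{d}A\|$ and then $\|\tfrac1d A - \tfrac1d \mathbb{E}A\|$ up to the rank-one top eigenvector correction; (iii) control the contribution of the remaining low-degree vertices and the pendant trees hanging off $G'$, showing that attaching them back to form $\tilde G(n,p)$ perturbs the relevant eigenvalue by at most $O(1/\sqrt d)$, again up to an event of the stated probability. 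Steps (ii) and (iii) each produce one of the two error terms $Cn\exp(-(2-\epsilon)d)$ and $C\exp(-d^{1/4}\log n)$.

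\textbf{Key steps in order.} First I would establish the deterministic structural facts: with probability $1 - n\exp(-\Omega(d))$ the giant component exists, is unique, contains all but $O(n e^{-\Omega(d)})$ vertices, and its $2$-core $G'$ has minimum degree $\geq d/4$ and vertex expansion bounded below by an absolute constant --- these are standard first/second moment and BFS-exploration estimates, and the event that a fixed vertex has degree $< d/4$ has probability $\exp(-(2-\epsilon')d)$ by a Chernoff bound, which after a union bound over $n$ vertices gives the first error term $Cn\exp(-(2-\epsilon)d)$. Second, on $G'$ I would bound the spectral norm of the centered adjacency matrix: writing $A' = \mathbb{E}A' + (A' - \mathbb{E}A')$ and noting $\mathbb{E}A'$ is approximately rank one with top eigenvalue $\approx d$, the bulk of the spectrum of $A'$ is governed by $\|A' - \mathbb{E}A'\|$, which is $O(\sqrt d)$ with probability $1 - \exp(-d^{1/4}\log n)$ by the now-standard combinatorial moment method of Feige--Ofek / F\"uredi--Koml\'os adapted to the sparse regime (truncating high-degree vertices is what forces the $d^{1/4}$ rather than $d^{1/2}$ in the exponent). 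Dividing through by $D^{1/2}$ on both sides and using minimum degree $\asymp d$ converts this into $|\lambda_2(\tilde G') - 1| = O(1/\sqrt d)$. Third, I would handle the passage from $G'$ back to $\tilde G$ by an eigenvalue-interlacing / Rayleigh-quotient argument: the trees and low-degree attachments form a graph with its own Laplacian spectrum, and a test-function argument (extending an eigenfunction of $G'$ by the harmonic extension onto the trees, and conversely) shows the second Laplacian eigenvalue moves by $O(1/\sqrt d)$; the bad event here is again absorbed into the two error terms already identified.

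\textbf{Main obstacle.} The delicate point is step (ii): in the regime $p$ only slightly above $\tfrac{\log n}{n}$, the raw adjacency matrix $A$ has spectral norm $\asymp \sqrt{\Delta}$ where $\Delta = \max$-degree $\gg d$, so the naive moment bound fails and one genuinely must pass to a truncated/regularized graph and argue that truncation does not disturb the second eigenvalue of the \emph{giant component}. Making the truncation compatible with the combinatorial structure (the $2$-core, the expansion property, the min-degree lower bound) while keeping all the failure probabilities within $Cn\exp(-(2-\epsilon)d) + C\exp(-d^{1/4}\log n)$ is the technical heart of \citet{SPEC}; everything else is assembled from standard \ER\ exploration estimates and the trace method. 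For the purposes of the present paper this theorem is invoked as a black box, so we do not reproduce these details and refer the reader to \cite{SPEC}.
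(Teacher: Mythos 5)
You are right that Theorem~\ref{thm:specgapER} is not proved in this paper at all: it is quoted verbatim from \cite[Theorem 1.1]{SPEC} and invoked purely as a black box (the paper's only added content is the reformulation in Remark~\ref{rem:specgap} and the observation, via \cite[Lemma 5.8]{SPEC}, that the giant component is well defined in this regime), so treating it as an external input is exactly what the paper does and there is no internal proof to compare your sketch against. Your outline of the external argument is a reasonable reconstruction in spirit, though be aware that the actual proof in \cite{SPEC} controls the normalized Laplacian directly via a Kahn--Szemer\'edi-type discrepancy argument after excising low-degree vertices, rather than the Feige--Ofek/F\"uredi--Koml\'os trace-method route you describe; since both the paper and your proposal use the result only as a citation, this difference is immaterial here.
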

The largest component is well-defined for $G(n,p)$ with $p$ as above; for example,  see \cite[Lemma 5.8]{SPEC}.
\begin{remark}
\label{rem:specgap}
The above bound can be interpreted as saying that if $p > \frac{(\alpha + 1)\log n}{n}$,  then for some $C = C(\alpha)$,   we have that 
$$\sP (|\lambda_2(\tilde{G}(n,p)) - 1|> \frac{C}{\sqrt{d}}) \leq Cn^{-2\alpha - 1/2} .$$
 This is because $n \exp (-(2-\epsilon)d)$ dominates $\exp (-d^{\frac{1}{4}} \log n)$, and $n \exp (-(2-\epsilon)d) < n \exp (-(2-\epsilon)(\alpha + 1)\log n) = n^{1 - (2-\epsilon)(1+\alpha)} = n^{-2\alpha - 1 + \epsilon(1+\alpha)}$.  Choosing $\epsilon = 1/2(1+\alpha)$ yields the inequality above.  
\end{remark}
In \cite{SVT} or \cite{Fowler2019}, a different version of Theorem 3.4 is used where the probbability estimate for $G(n,p)$ to be connected and have a large spectral gap is atmost $Cn^{-\alpha}$. Such a bound doesn't suffice for us and instead we use the above bound and estimate separately the probability of an \ER \, random graph consisting of a 'giant component' and isolated vertices. This decays sufficient fast close to connective regime; see STEPS 2,3 and 4 of the proof. We remark again on the proof simplifications for $t >> t_c$ and the neccessity for remaining steps in Remark \ref{rem:gap_approx}.

We first state a technical lemma necessary for proof of Theorem \ref{thm:bknkapprox} but shall defer its proof to the end.  For $m \geq 1$,  set
$$q(\alpha, m)  = \frac{(\alpha + 1)\log m}{m}.$$
\begin{lemma}
\label{lem:qalpham}
Let $\alpha = \frac{k(k+3)}{2} - \rho$ for $\rho > \rho_0$ for a $\rho_0$ small.  Set $\mu(t) := (n-k)t^k$ and $\umu(t) := \mu(t) - (\mu(t))^{3/5}$.  Then $\exists n_0 \in \mathbb{N}$ such that $\forall n \geq n_0$, $m \geq 1$ and $t \geq t_c$, we have that if $t < q(\alpha,m)$ then
$m < \umu(t)$.  
\end{lemma}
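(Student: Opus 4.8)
\textbf{Proof plan for Lemma \ref{lem:qalpham}.}

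The plan is to treat this as an elementary but careful monotonicity-and-asymptotics argument. The statement is of the form: for all $n$ large, if $t \ge t_c$ and $t < q(\alpha,m) = \frac{(\alpha+1)\log m}{m}$, then $m < \umu(t)$ where $\umu(t) = \mu(t) - \mu(t)^{3/5}$ and $\mu(t) = (n-k)t^k$. First I would fix $t \ge t_c$ and think of $m$ as the free variable. The hypothesis $t < \frac{(\alpha+1)\log m}{m}$ places an upper bound on $m$ in terms of $t$: since $x \mapsto \frac{\log x}{x}$ is decreasing for $x \ge e$, the set of $m$ satisfying the hypothesis is an interval $1 \le m \le m^*(t)$ (for $t$ small enough that such $m$ exist at all; for $t$ so large that no $m \ge 1$ works the statement is vacuous). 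So it suffices to verify the conclusion at the largest admissible $m$, i.e. to show $m^*(t) < \umu(t)$, where $m^*(t)$ is (essentially) the solution of $t\,m = (\alpha+1)\log m$.

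The key computation is then to get a usable upper bound on $m^*(t)$. Writing the defining relation as $m^*(t) = \frac{(\alpha+1)\log m^*(t)}{t}$ and bootstrapping (plug a crude bound on $\log m^*$ back in), one gets $m^*(t) \le \frac{(\alpha+1)}{t}\big(\log \tfrac{1}{t} + \log\log\tfrac{1}{t} + O(1)\big)$ for $t$ in the relevant range. On the other hand, $\umu(t) = (n-k)t^k\big(1 - ((n-k)t^k)^{-2/5}\big)$, and along $t \ge t_c$ we have $(n-k)t^k \ge (n-k)t_c^k$. Recalling $t_c^{k+1} = \frac{(\frac k2+1)\log n + \frac k2\log\log n + c}{n}$, one computes $t_c \asymp \big(\tfrac{\log n}{n}\big)^{1/(k+1)}$, hence $\log\frac1{t_c} \sim \frac1{k+1}\log n$ and $(n-k)t_c^k \asymp n^{1/(k+1)}(\log n)^{k/(k+1)}$, which grows polynomially in $n$. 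So I would split into two regimes in $t$: (a) $t$ of order $t_c$ up to roughly $\big(\tfrac{(\log n)^{A}}{n}\big)^{1/(k+1)}$ for a suitable power, where $\mu(t)$ is still at least a fixed power of $n$ and $\frac1t \log\frac1t$ is at most polylogarithmic times $n^{1/(k+1)}$-ish, so $m^*(t) = o(\umu(t))$ comfortably; and (b) $t$ bounded below by that value (up to $t=1$), where $m^*(t) \le \frac{(\alpha+1)}{t}\log\frac1t \cdot(1+o(1))$ is $O(n/\log n)$-ish at worst while $\mu(t)$ is at least order $n^{1/(k+1)}(\log n)^{\text{const}}$ — here one has to check the inequality most carefully, and it is exactly here that the precise value $\alpha = \frac{k(k+3)}2 - \rho$ and the largeness of $\rho$ (hence smallness of $\alpha+1$ relative to... well, $\alpha+1$ does not need to be small, only the constants need to line up) enter. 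Actually I expect the role of $\alpha = \frac{k(k+3)}2 - \rho$ is less about this lemma's inequality per se and more about consistency with how it is invoked later (via Remark \ref{rem:specgap} with exponent $2\alpha+\frac12$); within the lemma, any fixed $\alpha$ works once $n$ is large, and the "$\rho$ small, $\rho > \rho_0$" hypothesis should just be carried along.

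Concretely, the steps in order: (1) reduce to the extremal $m = m^*(t)$ by the monotonicity of $\frac{\log m}{m}$; (2) establish the two-term bootstrap bound $m^*(t) \le \frac{\alpha+1}{t}\big(\log\frac1t + \log\log\frac1t + C_\alpha\big)$ valid for $t \le t_0$ (some absolute $t_0<1$), noting the range $t=1$ down to $t_c$ is covered since for $t$ near $1$ the hypothesis forces $m$ to be $O(1)$ while $\umu(t) \to \infty$; (3) lower-bound $\umu(t) \ge \frac12 (n-k)t^k$ once $(n-k)t^k$ is large, which holds for all $t \ge t_c$ and $n \ge n_0$ since $(n-k)t_c^k \to \infty$; (4) therefore it suffices to prove $\frac{\alpha+1}{t}\big(\log\frac1t + \log\log\frac1t + C_\alpha\big) < \frac12 (n-k)t^k$, i.e. $(n-k)t^{k+1} > \frac{2(\alpha+1)}{1}\big(\log\frac1t + \log\log\frac1t + C_\alpha\big)$; (5) since $t^{k+1} \ge t_c^{k+1} = \frac{(\frac k2+1)\log n + \cdots}{n}$, the left side is $\ge (1-o(1))(\frac k2+1)\log n$, while $\log\frac1t \le \log\frac1{t_c} = \frac{1}{k+1}\log\frac{n}{(\frac k2+1)\log n + \cdots} \le \frac{1}{k+1}\log n$, so the needed inequality reads, up to lower order, $(\frac k2+1)\log n > \frac{2(\alpha+1)}{k+1}\log n$, i.e. $\frac{(k+2)(k+1)}{2} > 2(\alpha+1)$, i.e. $\alpha + 1 < \frac{(k+1)(k+2)}{4}$; with $\alpha = \frac{k(k+3)}2 - \rho$ this is $\frac{k(k+3)}{2} + 1 - \rho < \frac{(k+1)(k+2)}{4}$, which is false for large $k$ — so I must be more careful and use the true growth $(n-k)t^{k+1}$, not just its value at $t_c$: for $t$ substantially larger than $t_c$ the left side grows like $n t^{k+1}$ which dwarfs $\log\frac1t$. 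This is the main obstacle: handling the full range of $t$ uniformly, and I would resolve it by the regime split in the previous paragraph, checking that near $t=t_c$ one in fact compares $(n-k)t^k$ (polynomial in $n$) against $\log\frac1t$ (logarithmic in $n$), not $(n-k)t^{k+1}$ against $\log\frac1t$ — the extra factor of $t$ was the error above, and with the correct comparison the inequality holds with enormous room, uniformly in $t\ge t_c$, for all $n \ge n_0$.
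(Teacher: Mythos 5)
Your overall setup — reduce to the extremal $m^*(t)$ via monotonicity of $\log m / m$ and compare $m^*(t)$ with $\umu(t)$ — is a legitimate alternative to the paper's route. The paper instead observes that $q(\alpha,\cdot)$ is decreasing and $\umu(\cdot)$ is increasing, so that the whole claim reduces to a single inequality $t_c > q(\alpha, \umu(t_c))$; it then puts this over a common denominator and reads off the coefficient of $\log n$ in the numerator, finding $\frac{k}{2}+1 - \frac{\alpha+1}{k+1} = \frac{\rho}{k+1} > 0$. That reduction is cleaner than bootstrapping $m^*(t)$, but either approach can work.

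The problem is that your proposal, as written, does not close. In step (3) you lower-bound $\umu(t) \ge \tfrac12 \mu(t)$, which is far too lossy: the hypothesis $\alpha = \frac{k(k+3)}{2}-\rho$ gives precisely $\alpha+1 < \frac{(k+1)(k+2)}{2}$, so the comparison $\frac{\alpha+1}{t}\log\frac1t \lesssim \umu(t)$ at $t=t_c$ is a \emph{tight constant-level} inequality, with both sides of order $n^{1/(k+1)}(\log n)^{k/(k+1)}$; throwing away a factor of $2$ exactly destroys the margin, and this is why your step (5) produced the wrong threshold $\alpha+1 < \frac{(k+1)(k+2)}{4}$. Your diagnosis at the end — attributing the failure to "an extra factor of $t$" and claiming the correct comparison of $(n-k)t^k$ against $\log\frac1t$ holds with "enormous room" — is incorrect: multiplying both sides of $\frac{\alpha+1}{t}\log\frac1t < \umu(t)$ by $t$ does not change the inequality, and the quantity on the $m^*$ side is $\frac{\alpha+1}{t}\log\frac1t$, not $\log\frac1t$, so there is no polynomial-versus-logarithmic gap; the two sides are genuinely comparable at $t=t_c$. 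The fix you need is to replace $\umu(t)\ge\tfrac12\mu(t)$ by $\umu(t) \ge (1-\varepsilon)\mu(t)$ for any $\varepsilon>0$ once $n\ge n_0(\varepsilon)$ (since $\mu(t_c)\to\infty$ forces $\mu(t)^{-2/5}\to 0$ uniformly over $t\ge t_c$), and choose $\varepsilon$ small relative to $\rho$. With that, your bootstrap bound on $m^*(t)$ yields exactly the condition $\alpha < \frac{k(k+3)}{2}$, matching the paper. As it stands, the last two steps of your argument do not establish the lemma.
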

\begin{proof}(Proof of Theorem \ref{thm:bknkapprox})
Recall that we set $\beta_k(t) = \beta_k(X(n,t))$ for notational convenience.  We break the proof into six steps to follow it easily.  In the first step,  we reduce the proof to deriving quantitative bounds for certain probabilities at a fixed time $t$.  At the end of \textsc{STEP 1},  we explain what is done in each of the remaining steps.  We shall be using some topological lemmas from Section \ref{s:prelims} and probabilistic estimates from Section \ref{s:problemmas}.  \\

\noindent \underline{\textsc{Step 1: Reduction to estimates at a fixed time $t$.}}
Observe that
\begin{align*}
 \sP\Big( \bigcup_{t_c \leq t} \{ \beta_k(t) \neq N_k(t) \} \Big) & \leq  \sP\Big( \bigcup_{t_c \leq t} \{ \beta_k(t) \neq N_k(t) \} \cap  \{N_k(t_c) \leq m \} \Big) + \sP\Big(N_k(t_c) \geq m  \Big).
\end{align*}
Since $N_k(t_c)$ converges in distribution (see Proposition \ref{prop:van_isol_faces}),  we can choose $m$ large to make the second term small and so it suffices to show that for all $m \geq 1$,  
\begin{equation}
\label{e:bknkallt1}
\lim_{n \to \infty}\sP\Big( \bigcup_{t_c \leq t} \{ \beta_k(t) \neq N_k(t) \} \cap  \{N_k(t_c) = m \} \Big) = 0.
\end{equation}
From the proof of Proposition \ref{prop:van_isol_faces},  we know that $\sE[\hat{N}_k(t_c)] \to 0$.   Thus \eqref{e:bknkallt1} follows, if we show that for all $m \geq 1$
\begin{equation}
\label{e:bknkallt2}
\lim_{n \to \infty} \sP\Big( \bigcup_{t_c \leq t} \{ \beta_k(t) \neq N_k(t),  N_k(t_c) = m,  \hat{N}_k(t_c)= 0 \} \Big) = 0.
\end{equation}
We now define $R^*_{k-1,m}(t), R_{k-1,m}(t) $ motivated by similar definitions in the proof of  \cite[Theorem 1.3]{Fowler2019}.  
\begin{align}
\label{e:rk-1m*t} R_{k-1,m}^*(t) & \coloneqq \frac{1}{k!} \sum_{i_1,\dots,i_k}^{\neq} \1[ \mbox{$i_1,\ldots,i_{k}$ form a $(k-1)$-face which}\\ 
& \qquad \mbox{is a subface of $m$ or fewer $k$-faces in $X(s)$ for some $s \geq t$. }] \no \\
\label{e:rk-1mt} R_{k-1,m}(t) & \coloneqq  \frac{1}{k!} \sum_{i_1,\dots,i_k}^{\neq} \1[ \mbox{$i_1,\ldots,i_{k}$ form a $(k-1)$-face which} \\
& \qquad  \mbox{is a subface of $m$ or fewer $k$-faces in $X(t)$ }] \no
\end{align}
We can break up the relevant probability into the following terms:
\begin{align}
&  \sP\Big( \bigcup_{t_c \leq t} \{ \beta_k(t) \neq N_k(t),  N_k(t_c) = m,  \hat{N}_k(t_c)= 0 \} \Big)  \leq  \sP(R^*_{k-1,m}(t_c) > 0) \no \\
 & \quad + \sP\Big( \bigcup_{t_c \leq t } \{ \beta_k(t) > N_k(t),  N_k(t_c) = m,  \hat{N}_k(t_c)= 0,   R^*_{k-1,m}(t_c) = 0\} \Big)\no  \\
\label{e:bknkallt3} & \quad + \sP\Big( \bigcup_{t_c \leq t } \{ \beta_k(t) < N_k(t),  N_k(t_c) = m,  \hat{N}_k(t_c)= 0,   R^*_{k-1,m}(t_c) = 0\} \Big)
\end{align}
From upcoming Lemma \ref{l:rk*tc},  we have that the first term in the inequality \eqref{e:bknkallt3} converges to $0$ as $n \to \infty$.  To complete the proof,  we need to show that the second and third terms in the inequality \eqref{e:bknkallt3} vanish asymptotically. To prove the same, we shall show that 
\begin{equation}
\label{e:sumzn}
\sum_{n \geq 1}\sE[Z_n] < \infty 
\end{equation}
where
\begin{align*}
Z_n & := \int_{t_c}^{1} \1[\beta_k(t) > N_k(t),  N_k(t_c) = m,  \hat{N}_k(t_c)= 0,  R^*_{k-1,m}(t_c) = 0] \md t \\
& \quad +   \int_{t_c}^{1} \1[\beta_k(t) < N_k(t),  N_k(t_c) = m,  \hat{N}_k(t_c)= 0,  R^*_{k-1,m}(t_c) = 0] \md t.
\end{align*}
Since $\beta_k(t), N_k(t)$ are pure jump-processes in $t$ for every fixed $n$,   $Z_n \neq 0$ iff the event $A_n := \bigcup_{t_c \leq t } \{ \beta_k(t) \neq N_k(t),  N_k(t_c) = m,  \hat{N}_k(t_c)= 0,   R^*_{k-1,m}(t_c) = 0\}$ occurs.   So,  we have that
$$ \1[Z_n > 0]  =  \1[A_n].$$
Thus,  using \eqref{e:sumzn} and Markov's inequality, we derive that for all $\epsilon > 0$, 
$$ \sum_{n \geq 1} \sP(Z_n > \epsilon) < \infty,$$
and then by Borel-Cantelli Lemma we have that $Z_n \to 0$ a.s.  as $n \to \infty$.   This yields that $\1[A_n] \to 0$ a.s.  as $n \to \infty$ and from bounded covergence theorem,  we obtain that $\sP(A_n) \to 0$ as $n \to \infty$.   So,  we have argued that \eqref{e:sumzn} implies that the the second and third terms in the inequality \eqref{e:bknkallt3} vanish asymptotically and as argued above \eqref{e:sumzn}, this proves \eqref{e:bknkallt2}.  

Setting 
\begin{align*}
P(t) &:=  \sP(\beta_k(X(n,t)) > N_k(t),  N_k(t_c) = m,  \hat{N}_k(t_c)= 0,     R^*_{k-1,m}(t_c) = 0), \\
Q(t) &:= \sP(\beta_k(X(n,t)) < N_k(t),  N_k(t_c) = m,  \hat{N}_k(t_c)= 0,     R^*_{k-1,m}(t_c) = 0),
\end{align*}
we will prove \eqref{e:sumzn},  by showing that
\begin{equation}
\label{e:Pfinite}
 \sum_{n \geq 1}  \int_{t_c}^{1} P(t)  < \infty \quad \mbox{and} \quad  \sum_{n \geq 1}  \int_{t_c}^{1} Q(t)  < \infty
\end{equation}
We now give a roadmap to rest of the proof.  In \textsc{STEP 2},  we shall break $P(t)$ into two terms $P_1(t),  P_2(t)$ and then bound each of them in \textsc{STEP 3} and \textsc{STEP 4}  respectively. In \textsc{STEP 5}, we break $Q(t)$ into three terms and bound two of them and the remaining term is bounded in \textsc{STEP 6}.  \\

\noindent \underline{\textsc{Step 2: Breaking $P(t)$ into $P_1(t)$ and $P_2(t)$.}}
Define $X'(t) = X(t) \setminus \Sigma_k(t)$ where $\Sigma_k(t)$ is the set of isolated faces.  From Lemma \ref{l:bettikX'X},
$$ \{ \beta_k(X(n,t)) > N_k(t),   \hat{N}_k(t_c)= 0,  R^*_{k-1,m}(t) = 0 \} \subset \{ \beta_k(X'(n,t)) \geq 1,   \hat{N}_k(t_c)= 0,  R^*_{k-1,m}(t) = 0  \}.$$
For $X'(n,t)$ as above and $\sigma \in \mathcal{K}_{k-1}(n)$,  define
\begin{align*}
B_1(\sigma,t) &:= \{ \mbox{$\lk_{\sigma}(X(n,t))$ has comp.  of size in $[2,N_{\sigma}(t)/2]$} \},   \\
B_2(\sigma,t) &:= \{ \lambda_2(\tlk_{\sigma}(X(n,t))) \leq \frac{k}{k+1} \},  
\end{align*}
where $N_{\sigma}(t)$ is the number of vertices in $\lk_{\sigma}(X(n,t))$ and $\tlk_{\sigma}(X(n,t))$ is the giant component in $\lk_{\sigma}(X(n,t))$.  The giant component (i.e.,  largest component) is well-defined for large $n$ (see \cite[Lemma 5.8]{SPEC} for example).  Note that $\lk_{\sigma}(X(n,t))$ is also an \ER random graph $G(N_{\sigma}(t),t)$.   Observe that  by Lemma \ref{l:lkX'conn},  we have
\begin{equation}
\label{e:tGlkX'conn}
\{ \lk_{\sigma}(X'(n,t)) \neq \tlk_{\sigma}(X(n,t)) \} =   \{ \mbox{$\lk_{\sigma}(X'(n,t))$ is not connected} \} \subset B_1(\sigma,t).
\end{equation}
Thus, using the above observations,  we derive that
\begin{align}
P(t) &  \leq   \sP\Big( \beta_k(X'(n,t)) \geq 1,   \hat{N}_k(t_c)= 0,  R^*_{k-1,m}(t) = 0 \Big) \no \\
& \leq  \sP\Big(\bigcup_{ \sigma \in X_k(n,t)}  \{ \mbox{$\lk_{\sigma}(X'(n,t))$ is not connected} \}  \Big) \no \\
& \quad  +  \sP\Big(\bigcup_{ \sigma \in X_k(n,t)}  \{ \lambda_2(\lk_{\sigma}(X'(n,t))) \leq \frac{k}{k+1} \} \cap  \{ \mbox{$\lk_{\sigma}(X'(n,t))$ is connected} \} \Big)  \no \\
& \quad \quad \quad \quad \mbox{(using Garland's theorem - Theorem \ref{thm:garland})} \no \\
& \leq  \sP\Big(\bigcup_{ \sigma \in X_k(n,t)} B_1(\sigma,t) \Big) +  \sP\Big(\bigcup_{ \sigma \in X_k(n,t)} B_2(\sigma,t) \Big), \quad \mbox{(using \eqref{e:tGlkX'conn})}   \no \\
& \leq  \sum_{\sigma  \in \mathcal{K}_{k-1}(n)}  \sP \Big( \{ \sigma \in X_k(n,t) \} \cap B_1(\sigma,t) \Big) + \sum_{\sigma  \in \mathcal{K}_{k-1}(n)}  \sP\Big( \{ \sigma \in X_k(n,t) \} \cap B_2(\sigma,t) \Big) \no 
 \\
\label{e:Ptsplit} & = n^kt^{\binom{k}{2}} \left[P_1(t) + P_2(t) \right],   \\
& \quad \quad (\mbox{ $\{ \sigma \in X_k(n,t) \}$ and $B_i(\sigma,t)$ are independent and the latter are identically distributed}) \no
\end{align}
where $P_1(t) = \sP(B_1(\sigma,t)), P_2(t) = \sP(B_2(\sigma,t))$ for some $\sigma \in \mathcal{K}_{k-1}(n)$.   Thus to prove $ \sum_{n \geq 1}  \int_{t_c}^{1} P(t)  < \infty$,  it suffices to show that $ \sum_{n \geq 1} \int_{t_c}^{1} n^kt^{\binom{k}{2}}P_i(t) < \infty$ for $i = 1,2$.  \\

\noindent \underline{\textsc{Step 3: Bounding $P_1(t)$.}} Recall $\mu(t) = (n-k)t^k, \umu(t) = \mu(t) - \mu(t)^{3/5}$ and set $\omu(t) := \mu(t) + \mu(t)^{3/5}$.
As for $P_1(t)$,  observe that
\begin{align}
P_1(t) & \leq \sP \Big( \{ \mbox{$\lk_{\sigma}(X(n,t))$ has comp.  of size in $(2,\omu(t)/2)$} \} \cap \{N_{\sigma}(t) \in (\umu(t), \omu(t)) \} \Big) \no  \\
& \quad  + \sP\Big( | N_{\sigma}(t)- \mu(t)| \geq \mu(t)^{3/5} \Big) \no \\
\label{e:P1tsplit} & =:  T_1(t)  + T_2(t)
\end{align}

We now try to bound $T_1(t)$ which is a computation about an \ER \, random graph similar to $G(nt^k,t)$. Since we are close to connectivity regime for $G(nt^k,t)$, with very high probability, $G(nt^k,t)$ is connected except for finitely many isolated nodes. Thus, we expect $P_1(t)$ to decay sufficiently fast.

Let $C_j(\sigma,t)$ be the number of components in $\lk_{\sigma}(X(n,t))$ with $j$ vertices. Set $u_j(t) = \sE \big[ C_j(\sigma,t)\1[N_{\sigma}(t) \in (\umu(t), \omu(t))] \big]$. Trivially, we have that
$$ T_1(t) \leq \sum_{j=2}^{\lfloor \omu(t)/2 \rfloor}u_j(t).$$
Since $\lk_{\sigma}(X(n,t)) \overset{d}{=} G(N_{\sigma}(t),t)$, by using the standard spanning-tree counting estimates (see for example, \cite[Section 4.1]{FK}), and conditioning on $N_{\sigma}$
we have that
$$ u_j(t) \leq \frac{\omu(t)^j}{j!}j^{j-2}t^{j-1}(1-t)^{j(\umu(t)-j)} \leq \frac{1}{tj^2}(e\omu(t)t)^j e^{-tj(\umu(t)-j)}$$
For $n$ large and $j \leq \omu(t)/2$, it holds that $\omu(t) \leq 5nt^k/4, \umu(t) - \omu(t)/2 \geq 2nt^k/5$ and hence we derive that
$$ n^k t^{\binom{k}{2}}u_j(t) \leq \frac{(n^2t^{k+1})^{k/2}}{tj^2} (\frac{5}{4}ent^{k+1}e^{-2nt^{k+1}/5})^j.$$
We can choose $\kappa$ large depending on $k$ such that for $t \geq \kappa t_c$,  we have that $2nt^{k+1}/5 \geq (\frac{k}{2}+5)\log n$ and hence, we obtain that
$$ n^k t^{\binom{k}{2}}u_j(t) \leq j^{-2}(\frac{5}{4}en^{-3/2})^j.$$
This gives that $\sum_{j = 2}^{\lfloor \omu(t)/2 \rfloor}n^k t^{\binom{k}{2}}u_j(t) \leq Cn^{-5/4}$ for $t \geq \kappa t_c$ and some constant $C$. So
\begin{equation}
\label{e:sumT1finite1}
 \sum_{n \geq 1} \int_{\kappa t_c}^{1} n^k t^{\binom{k}{2}} T_1(t) \md t  < \infty.
 \end{equation}
Now consider $t \in [t_c,\kappa t_c)$. Then, again using the above bounds on $u_j(t)$ along with the choice of $\omu(t),\umu(t)$, we obtain that for large enough $n$, 
\begin{align*}
n^k t^{\binom{k}{2}}u_j(t) &\leq \frac{1}{t_cj^2}(2k\kappa n \log n)^{k/2} (\frac{5}{4}e\kappa^{k+1} (nt_c)^{k+1}n^{-2(k/2+1)/5})^j \\
& \leq \frac{1}{t_cj^2} n^{\frac{k}{2}(1 - 2j/5) - \frac{19}{20}} (2k\kappa \log n)^{k/2} (\frac{5}{4}e\kappa^{k+1} (nt_c)^{k+1}n^{-1/20})^j.
\end{align*}
Since $\frac{k}{2}(1 - 2j/5) - \frac{19}{20} < -1$ for $k \geq 1, j \geq 3$, this suffices to derive that
$$ \sum_{n \geq 1} \int_{t_c}^{\kappa t_c}\sum_{j = 3}^{\lfloor \omu(t)/2 \rfloor}n^k t^{\binom{k}{2}}u_j(t) < \infty.$$
For $j = 2$, one can obtain a more exact bound as
$$n^k t^{\binom{k}{2}}u_2(t) \leq \frac{1}{t_cj^2}(2k\kappa n \log n)^{k/2} (\frac{5}{4}e\kappa^{k+1} (nt_c)^{k+1}e^{-t(\umu(t)-2)})^2.$$
Substituting this into the above bounds, we have that
$$\sum_{n \geq 1} \int_{t_c}^{\kappa t_c}\sum_{j = 2}^{\lfloor \omu(t)/2 \rfloor}n^k t^{\binom{k}{2}}u_j(t) < \infty$$
and hence combining with \eqref{e:sumT1finite1}, we have shown that
\begin{equation}
\label{e:sumT1finite}
 \sum_{n \geq 1} \int_{t_c}^{1} n^k t^{\binom{k}{2}} T_1(t) \md t  < \infty.
 \end{equation}

Now we bound $T_2(t)$. Since $N_{\sigma}(t)$ has binomial distribution with parameters $n-k, t^k$, using Bernstein's inequality \cite[Theorem 2.8.4]{HDP} for binomial random variables,  we can derive that 
\begin{align*}
T_2(t) &=  \sP(|N_{\sigma}(t) - \mu(t)| \leq \mu(t)^{3/5})  \leq 2 \exp \bigg(-\frac{(n-k)^{\frac{1}{5}}t^{\frac{6k}{5}}}{2(t^k(1-t^k) + (1/3)(\frac{t^{3k}}{(n-k)^2})^\frac{1}{5})}\bigg) \\
& \leq 2 \exp \bigg(-\frac{(n-k)^{\frac{1}{5}}t^{\frac{6k}{5}}}{2(t^k + (1/3)(\frac{t^{3k}}{(n-k)^2})^\frac{1}{5})}\bigg)  \\
& \leq 2 \exp \bigg(-\frac{n^{\frac{1}{5}}t_c^{\frac{6k}{5}}}{2(t_c^k + (\frac{t_c^{3k}}{n^2})^\frac{1}{5})}\bigg) \\
& \quad \quad \mbox{(as the term inside exponential is increasing in $t$)}.
\end{align*}
Now substituting the above bound,  we obtain that
\begin{align*}
& \int_{t_c}^{1} n^k t^{\binom{k}{2}} T_2(t) \md t \leq  2 \int_{t_c}^1 n^k t^{\binom{k}{2}}\exp \bigg(-\frac{n^{\frac{1}{5}}t_c^{\frac{6k}{5}}}{2(t_c^k + (\frac{t_c^{3k}}{n^2})^\frac{1}{5})}\bigg)\md t \\
& \leq 2 \int_{t_c}^1 n^k t^{\binom{k}{2}}\exp (- n^{\frac{3}{5(k+1)}} (\log n)^{\frac{3k}{5(k+1)}})\md t  \quad \mbox{(using the value of $t_c$)} \\
& \leq 2 n^k \exp (- n^{\frac{3}{5(k+1)}} (\log n)^{\frac{3k}{5(k+1)}}). 
\end{align*}
Hence,  we have that
\begin{equation}
\label{e:sumT2finite}
 \sum_{n \geq 1} \int_{t_c}^{1} n^k t^{\binom{k}{2}} T_2(t) \md t  < \infty.
 \end{equation}
So combining \eqref{e:sumT2finite} with \eqref{e:sumT1finite} and \eqref{e:P1tsplit},  we obtain that
\begin{equation}
\label{e:P1finite}
 \sum_{n \geq 1}  \int_{t_c}^{1} n^k t^{\binom{k}{2}} P_1(t)  < \infty.
\end{equation}
\\
\noindent \underline{\textsc{Step 4: Bounding $P_2(t)$.}}
We will estimate $P_2(t)$ using Theorem \ref{thm:specgapER}.   

Let $\alpha = \frac{k(k+3)}{2} - \rho$ for $\rho$ to be chosen later and recall $\mu(t) = (n-k)t^k, \umu(t) := \mu(t) - (\mu(t))^{3/5}$ be as in Lemma \ref{lem:qalpham}.  Note that $t(\umu(t)-1)$ is increasing in both $n$ and $t$ for $t \geq t_c$. Also, since $t(\umu(t)-1) \to \infty$ as $n \to \infty$, we can pick $n_0$ large enough so that Lemma \ref{lem:qalpham} applies. Also, assume that for all $n \geq n_0$ and for $C = C(\alpha)$ as in Remark \ref{rem:specgap}, it holds that
\begin{equation}
\label{e:csqrtd}
\frac{C}{\sqrt{(\umu(t)-1)t}}  \leq \frac{C}{\sqrt{(\umu(t_c)-1)t_c}}  \leq \frac{1}{k+1},  \quad t \geq t_c.
\end{equation}

Recall the notation for $q(\alpha,m)$ as before Lemma \ref{lem:qalpham} and also that $N_{\sigma}(t)$ is the number of vertices in $\lk_{\sigma}(X(n,t))$.  
\begin{align}
P_2(t) & \leq  \sP(B_2(\sigma,t) \cap \{ N_{\sigma}(t) \geq \umu(t) \} )+  \sP(N_{\sigma}(t) < \umu(t)) \no \\
& \leq \sP \big(  B_2(\sigma,t) \cap \{ t \geq q(\alpha, N_{\sigma}(t)) ,  N_{\sigma}(t) \geq \umu(t)\} \big)+  \sP(N_{\sigma}(t) < \umu(t))  \no \\
&  \quad \quad \mbox{(using $n \geq n_0$ and Lemma \ref{lem:qalpham})}  \no  \\
& \leq \sP \big(  B_2(\sigma,t) \cap \{ t \geq q(\alpha, N_{\sigma}(t)) ,  N_{\sigma}(t) \geq \umu(t)\} \big)+  \sP(| N_{\sigma}(t) - \mu(t)| \geq \mu(t)^{3/5} )  \no \\
\label{e:ZnT1T2bd} & =:  T_3(t) + T_2(t),
\end{align}
where recall that $T_2(t)$ is defined in \eqref{e:P1tsplit}.  Now,   we shall evaluate $T_3(t)$ by conditioning on $N_{\sigma}(t)$.  Observe that conditioned on $N_{\sigma}(t)$,  $\lk_{\sigma}(X(n,t))$ has the same distribution as $G(N_{\sigma},t)$.  Thus,  given $N_{\sigma}(t)$,  
$$B_2(\sigma,t)  \subset \{ |1 - \lambda_2(\tG(N_{\sigma}(t),t))| > \frac{1}{k+1} \}.$$
So, we derive that 
\begin{align*}
T_3(t) &  =   \mathbb{E} \bigg[\1[ t \geq q(\alpha, N_{\sigma}(t)),  N_{\sigma}(t) \geq \umu(t) ] \mathbb{P} \big( B_2(\sigma,t) \mid N_{\sigma}(t) \big)  \bigg] \\
& \leq \mathbb{E} \bigg[[\1[t \geq q(\alpha, N_{\sigma}(t)) , N_{\sigma}(t) \geq \umu(t) ]  \mathbb{P} \big(  |1 - \lambda_2(\tG(N_{\sigma}(t),t))| > \frac{1}{k+1} \mid N_{\sigma}(t)  \big)  \bigg]  \\
& \leq \mathbb{E} \bigg[[\1[t \geq q(\alpha, N_{\sigma}(t)) , N_{\sigma}(t) \geq \umu(t) ]   C_{\alpha}  N_{\sigma}(t)^{-2\alpha-1/2} \bigg]
\\ & \quad  \mbox{(from Remark \ref{rem:specgap} with $d = (N_{\sigma}(t)-1)t$ and \eqref{e:csqrtd}, for $n \geq n_0$)}\\
& \leq C_{\alpha}  \umu(t)^{-2\alpha-1/2}  \\
& \quad  \mbox{(since $N_{\sigma}(t) \geq \umu(t)$)}. 
\end{align*}
Without loss of generality,  we assume that for $n \geq n_0$,  $(1 - \mu(t_c)^{-2/5})^{-2\alpha-1/2} \leq 1/2$.  As $\mu(t) \geq \mu(t_c)$,  we have that for $n \geq n_0$,  $(1 - \mu(t)^{-2/5})^{-2\alpha-1/2} \leq 1/2$ for all $t \geq t_c$.  Thus,  we can derive that
\begin{align*}
& \int_{t_c}^{1} n^k t^{\binom{k}{2}} T_3(t) \md t \leq \int_{t_c}^{1} n^kt^{\binom{k}{2}} \umu(t)^{-2\alpha-1/2} \md t
\leq  \int_{t_c}^{1} n^kt^{\binom{k}{2}} (nt^k)^{-2\alpha-1/2} (1 - \mu(t)^{-2/5})^{-2\alpha-1/2}\md t\\
& \leq   \frac{1}{2}n^{k - 2\alpha-1/2} \int_{t_c}^{1} t^{k(\frac{k+1}{2} - (2\alpha + 1/2))} \md t \\
& \leq   \frac{1}{2}n^{k - 2\alpha-1/2}t_c^{k(\frac{k+1}{2} - (2\alpha + 1/2))}  \quad \mbox{($t$ has a negative exponent as $2\alpha = k(k+3) - 2\rho$ with $\rho$ small, )} \\
%=& \bigg((\frac{k}{2}+1)\log n + \frac{k}{2}\log \log n + c\bigg)^{\frac{k}{k+1}(\frac{k+1}{2} - (2\alpha + 1/2))} n ^{k - 2\alpha-1 - \frac{k}%{k+1}(\frac{k+1}{2} - (2\alpha + 1))}\\
& =   \frac{1}{2} \bigg((\frac{k}{2}+1)\log n + \frac{k}{2}\log \log n + c\bigg)^{\frac{k}{k+1}(\frac{k+1}{2} - (2\alpha + 1/2))} n ^{\frac{k}{2} - \frac{2\alpha+1/2}{k+1}}  \quad  \mbox{(substituting the value of $t_c$)} \\
& =   \frac{1}{2} \bigg((\frac{k}{2}+1)\log n + \frac{k}{2}\log \log n + c\bigg)^{\frac{k}{k+1}(\frac{k+1}{2} - (2\alpha + 1/2))} n^{-\frac{k^2+5k+1 - 2\rho}{2(k+1)}}, 
\end{align*}
where in the last line we have used that $2\alpha = k(k+3) - 2\rho$ and we can choose a smaller $\rho$ if needed. Since the coefficient of $n$ is less than -1, we have from the above bound that
\begin{equation}
\label{e:sumT1}
\sum_{n \geq 1} \int_{t_c}^{1} n^k t^{\binom{k}{2}} T_3(t) \md t  < \infty.
\end{equation}
This along with \eqref{e:sumT2finite} and \eqref{e:ZnT1T2bd} gives 
\begin{equation}
\label{e:P2finite}
 \sum_{n \geq 1}  \int_{t_c}^{1}  n^k t^{\binom{k}{2}}  P_2(t)  < \infty.
\end{equation}

\noindent \underline{\textsc{Step 5: Bounding $Q(t)$.}}
Recall that $\Sigma_k(t)$ is the set of isolated faces in $X(n,t)$ and $X'(n,t) = X(n,t) -  \Sigma_k(t)$.   From Lemma \ref{l:xtox'} we have that $Q(t)$ can be bounded as follows.
$$Q(t) \leq \sP\Big( \beta_{k-1}(X'(t)) \neq 0,  N_k(t_c) = m,  \hat{N}_k(t_c)= 0,   R^*_{k-1,m}(t_c) = 0 \Big) .$$
Thus,  we can further bound these by using Theorem \ref{thm:garland} again as follows.
\begin{equation}
\label{e:Qtsplit}
Q(t) \leq Q_1(t) + Q_2(t) + Q_3(t),
\end{equation}
where
\begin{align*}
Q_1(t) &:= \sP \Big( \cup_{\sigma \in \mathcal{K}_{k-2}(n)} \{ \mbox{$\lk_{\sigma}(X'(n,t))$ is not connected} \} \cap \{N_k(t_c) = m,  \hat{N}_k(t_c)= 0,   R^*_{k-1,m}(t_c) = 0\} \Big),  \\
Q_2(t) &:= \sP \Big(  \cup_{\sigma \in \mathcal{K}_{k-2}(n)}  \{ | \lambda_2(\lk_{\sigma}(X'(n,t))) -    \lambda_2(\lk_{\sigma}(X(n,t))) | \geq \frac{1}{2(k+1)}\} \Big), \\
Q_3(t) &:=  \sP \Big(  \cup_{\sigma \in \mathcal{K}_{k-2}(n)}\{ | 1 -  \lambda_2(\lk_{\sigma}(X(n,t))) | \geq \frac{1}{2(k+1)}\} \Big).
\end{align*}
Using the spectral gap theorem (Theorem \ref{thm:specgapER}) as in \textsc{STEP 4} in bounding $P_2(t)$,  one can show that
\begin{equation}
\label{e:Q3finite}
 \sum_{n \geq 1}  \int_{t_c}^{1} Q_3(t)  < \infty.
\end{equation}
One can use the {\em Wielandt–Hoffman theorem} \cite{hoffweil} to bound $Q_2(t)$ as follows.  It allows us to control the spectral gap of $\lk_{\sigma}(X'(n,t))$ in terms of the spectral gap of $\lk_{\sigma}(X(n,t))$. The theorem says the following: Let $A$ and $B$ be normal matrices. Let their eigenvalues $a_i$ and $b_i$ be ordered such that $\sum_i |a_i - b_i|^2$ is minimised. Then,
\begin{equation}
\label{e:hw}
\sum_i |a_i - b_i|^2 \leq || A - B ||
\end{equation}
where $|| \cdot ||$ denotes the Frobenius matrix norm. Thus, we have
\begin{align*}
Q_2(t) &= \sP \Big(  \cup_{\sigma \in \mathcal{K}_{k-2}(n)}  \{ | \lambda_2(\lk_{\sigma}(X'(n,t))) -    \lambda_2(\lk_{\sigma}(X(n,t))) | \geq \frac{1}{2(k+1)}\} \Big)\\
& \leq  \sP \Big(  \cup_{\sigma \in \mathcal{K}_{k-2}(n)}  \{ ||\mL_{\lk_{\sigma}(X'(n,t))} - \mL_{\lk_{\sigma}(X(n,t))}|| \geq \frac{1}{2(k+1)}\} \Big) \quad  \mbox{(by \eqref{e:hw})}\\
\end{align*}
Since the entries of $\mL_{\lk_{\sigma}(X'(n,t))}$ and $\mL_{\lk_{\sigma}(X(n,t))}$ differ by the degrees in $\lk_{\sigma}$ of those vertices that form a maximal $k$-face with $\sigma$, we will overcount and bound the probability that some vertex outside $\sigma$ has low connectivity with the vertices in $\lk_{\sigma}(X(n,t))$. Note that for any vertex $v \notin \sigma$, the number of edges from $v$ to a vertex in $\lk_{\sigma}(X(n,t))$ has binomial distribution with parameters $(n-k),t^k$. Call this quantity $deg'(v)$. Then following from above, 
\begin{align*}
Q_2(t) &\leq \sP \Big(  \cup_{\sigma \in \mathcal{K}_{k-2}(n)}  \{ ||\mL_{\lk_{\sigma}(X'(n,t))} - \mL_{\lk_{\sigma}(X(n,t))}|| \geq \frac{1}{2(k+1)}\} \Big)\\
&\leq  \sP \Big(  \bigcup_{\sigma \in \mathcal{K}_{k-2}(n)} \bigcup_{v \notin \sigma} \{ deg'(v) \leq 3(k+1)\binom{m}{2} \} \Big)\\
&\leq \binom{n}{k-1} (n-k+1) \exp \Big( - \frac{(3(k+1)\binom{m}{2} - (n-k)t^k)^2}{2(n-k)t^k(1-t^k) + (1/3)(3(k+1)\binom{m}{2} - (n-k)t^k)} \Big)\\
& \quad  \mbox{(by Bernstein's inequality \cite[Theorem 2.8.4]{HDP})}\\
&\leq n^k \exp \Big( - \frac{(3(k+1)\binom{m}{2} - (n-k)t^k)^2}{2(n-k)t^k(1-t^k) + (1/3)(3(k+1)\binom{m}{2} - (n-k)t^k)} \Big)\\
&\leq n^k \exp \Big( - \frac{(3(k+1)\binom{m}{2} - nt^k)^2}{2nt^k + (1/3)(3(k+1)\binom{m}{2} - nt^k)} \Big)\\
&\leq n^k \exp \Big( - \frac{(3(k+1)\binom{m}{2} - nt_c^k)^2}{2nt_c^k + (1/3)(3(k+1)\binom{m}{2} - nt_c^k)} \Big)\\
& \sim n^k \exp ( - n^{1/(k+1)})
\end{align*}
Thus,  we have 
\begin{equation}
\label{e:Q2finite}
 \sum_{n \geq 1}  \int_{t_c}^{1} Q_2(t)  < \infty.
\end{equation}
The next and the last step of proof will be focussed on bounding $Q_1(t)$.   \\

\noindent \underline{\textsc{Step 6: Bounding $Q_1(t)$.}}

Under the event in consideration in $Q_1(t)$,  $\Sigma_k(t)$ has cardinality $m$ and since removal of faces in $\Sigma_k(t)$ disconnects $\lk_{\sigma}(X'(n,t))$, this implies that $\lk_{\sigma}(X(n,t))$ is not $m$-connected i.e., removal of $m$ edges disconnects $\lk_{\sigma}(X(n,t))$ and so does removal of $m$ vertices. This is because removal of every $k$-face can at most delete a single edge in $\lk_{\sigma}(X'(n,t))$.  Since $\lk_{\sigma}(X(n,t))$ is again distributed as $G(N_{\sigma}(t),t)$ where $N_{\sigma}(t)$ has Binomial($n,t^{k-1}$) distribution, using Markov's inequality,  we have that
\begin{equation}
\label{e:q1_ERmdiscon}
Q_1(t) \leq\sum_{\sigma \in \mathcal{K}_{k-2}(n)} \sP \Big( \mbox{$\lk_{\sigma}(X(n,t))$ is not $m$-connected}  \Big)
\end{equation}
and to bound this we argue as in \cite[Theorem 4.3]{FK}. Thus, in analogy to STEP 3, this is a computation about an \ER random graph similar to $G(nt^{k-1},t)$. Since we are well above the connectivity regime for $G(nt^{k-1},t)$, with very high probability, $G(nt^k,t)$ is $m$-connected for any $m$. Thus, we expect $Q_1(t)$ to decay sufficiently fast. We now formalize this.

Fix $\sigma \in  \mathcal{K}_{k-2}(n)$, $n(t) = nt^{k-1}$ and $N = N_{\sigma}(t)$.  Again using Bernstein's inequality as in the proof of \eqref{e:sumT2finite}, we can show that
\[ \sum_{n \geq 1} \int_{t_c}^1 n^{k-1}t^{\binom{k-1}{2}}\sP \big( |N - n(t)|  \geq n(t)^{3/5} \big) \, \md t < \infty. \]
Set $\un(t) := n(t) - n(t)^{3/5},  \on(t) := n(t) + n(t)^{3/5}$. Thus to show summability of $Q_1(t)$, it suffices to show that
\begin{equation}
\label{e:sumfinq1}
\sum_{n \geq 1} \int_{t_c}^1 n^{k-1}t^{\binom{k-1}{2}} \sP \Big( \mbox{$\lk_{\sigma}(X(n,t))$ is not $m$-connected} \cap \{N \in [\un(t),\on(t)] \} \Big) \, \md t  < \infty.
\end{equation}
  For a graph $G$ on $N$ vertices (say for $N > 2m$) to be not $m$-connected either of the following two events have to happen - (i) the graph has a vertex of degree at most $m$ or (ii) there exist disjoint subsets of vertices $S,T$ such that $|S| < m+1, m - |S| + 2 \leq |T| \leq \frac{N - |S|}{2}$ and $T$ is a component of $G - S$. In (ii), the assumption on cardinality of $T$ comes from the fact that if (i) doesn't happen and $G$ is not $m$-connected then $G-S$ has two components and so one of them must be at most $\frac{N - |S|}{2}$. Also since $T$ is a component of $G-S$, the vertices in $T$ have edges to $S$ or within $T$. Since $|S| < m$, a vertex in $T$ must have at least $m+1 - |S|$ neighbours in $T$ and this gives the lower bound on $|T|$.
  
Thus abbreviating notation, we have that
\begin{equation}
\label{e:notmconn}
 \{ \mbox{$\lk_{\sigma}(X(n,t))$ is not $m$-connected} \} \leq \{ \mbox{$\exists$ deg $ \leq m$} \}  + \{ \exists S,T  \}.
 \end{equation}

We will first bound the second term.  Now by Markov's inequality as well as using a spanning tree argument as in bounding $T_1(t)$ in STEP 3 (see also \cite[Theorem 4.3]{FK}) and conditioning on $N = N_{\sigma}(t)$,  we have that 
\begin{align*}
& \sP \Big( \exists S,T  \Big | N \Big)  \leq \sum_{j=1}^{m}\sum_{l= \max \{2,m-j+2\}}^{\frac{N-j}{2}} \binom{N}{j} \binom{N}{l}l^{l-2} t^{l-1} \binom{jl}{j} t^j (1-t)^{l(N-j-l)} \\
& \leq t^{-1}  \sum_{j=1}^{m}\sum_{l= \max \{2,m-j+2\}}^{\frac{N-j}{2}} \Big( \frac{Ne}{j} (l e) t \Big)^j l^{-2} \Big( Ne t e^{-(N-l-j)t} \Big)^l    \, \, (\mbox{using $j! \geq (j/e)^j, 1 - t \leq e^{-t}$}) \\
& \leq   t^{-1}  \sum_{j=1}^{m} \sum_{l= \max \{2,m-j+2\}}^{\frac{N-j}{2}} \Big( \frac{Ne}{j} (l e) t  \Big)^j l^{-2} \Big( Ne t e^{-(N-j)t/2} \Big)^l    \, \, (\mbox{since $j + l \leq (N+j)/2$})
\end{align*}
Thus, we can derive that for $n$ large, there is a constant $C$ (depending on $m$) such that
\[
\sP \Big( \exists S,T  \Big | N \in [\un(t),\on(t)] \Big)  \leq C t^{-1} \big( \on(t)^2t \big)^m \sum_{l= 2}^{\frac{\on(t)-1}{2}} l^{-2} \big( e \, \on(t)t \, e^{-(\un(t)-m)t/2} \big)^l
\]
Since $t \geq t_c$,  we have that $(\un(t)-m+1)t \geq \frac{n^{1/{k+1}}}{2}$ for large $n$ and so using $m,k \geq 1$ we obtain that, 
\[ \sP \Big( \exists S,T  \Big | N \in [\un(t),\on(t)] \Big) \leq  Ce^{-\frac{n^{1/{k+1}}}{4}}n^{2(m-1)} \sum_{l= 2}^{\frac{\on(t)-1}{2}} l^{-2} \big( e \, n \, e^{-\frac{n^{1/{k+1}}}{8}} \big)^l \]
Because of the exponentially fast decaying term, one can show that
\begin{equation}
\label{e:notmconnfin1}
\sum_{n \geq 1} \int_{t_c}^1 n^{k-1}t^{\binom{k-1}{2}} \sP \Big( \{ \exists S, T \} \cap \{N \in [\un(t),\on(t)] \} \Big) \, \md t  < \infty.
\end{equation}

We will now bound the probability of the first term.  Given $N$, the degree of a vertex in $\lk_{\sigma}(X(n,t))$ has Binomial($N,t$) distribution and so,
$$  \sP \Big( \mbox{$\exists$ deg $< m$} \mid N \Big) \leq \sum_{j=0}^{m-1} N t^j(1-t)^{N-j}.$$
Now using the bounds on $N$ as above, we derive that for a constant $C$ (depending on $m$),
\begin{align*}
 \sP \Big( \mbox{$\exists$ deg $< m$} | \{N \in [\un(t),\on(t)] \} \Big) & \leq \sum_{j=0}^{m-1} \on(t) t^j e^{-(\un(t)-j)t} \\
& \leq Cn e^{-\frac{n^{1/k+1}}{2}}.
\end{align*}
Thus, we can immediately obtain that
\begin{equation}
\label{e:notmconnfin2}
\sum_{n \geq 1} \int_{t_c}^1 n^{k-1}t^{\binom{k-1}{2}} \sP \Big( \{\mbox{$\exists$ deg $< m$}\} \cap \{N \in [\un(t),\on(t)] \} \Big) \, \md t  < \infty.
\end{equation}
Now from \eqref{e:sumfinq1}, \eqref{e:notmconn}, \eqref{e:notmconnfin1} and \eqref{e:notmconnfin2}, we have that
\begin{equation}
\label{e:Q1finite}
 \sum_{n \geq 1}  \int_{t_c}^{1} Q_1(t)  < \infty,
\end{equation}
and hence combined with \eqref{e:Qtsplit}, \eqref{e:Q1finite}, \eqref{e:Q2finite}, we obtain that 
\begin{equation}
\label{e:Qfinite}
 \sum_{n \geq 1}  \int_{t_c}^{1} Q(t)  < \infty.
\end{equation}

\noindent  Thus the proof of Theorem~\ref{thm:bknkapprox} is complete by combining \eqref{e:sumzn},  \eqref{e:Pfinite},  \eqref{e:Ptsplit},  \eqref{e:P1finite},  \eqref{e:P2finite} and \eqref{e:Qfinite}.
\end{proof}
\begin{remark}
\label{rem:gap_approx}
If we wanted to prove Theorem~\ref{thm:bknkapprox} for only $t >> t_c$,  then $N^*_k(t) = 0$ with high probability and this yields simplifcations to STEPS 2, 3 and 4 in the above proof and will make STEPS 5 and 6 redundant. Furthermore, though a weaker bound to that for $Q_1(t)$ is claimed in the proof of \cite[Theorem 1.3]{Fowler2019}, the proof seems to assume that $\lk_{\sigma}(X'(n,t))$ is distributed as \ER random graph (first para of Page 114 therein) which may not be true. As shown in our STEP 6, this can be done via $m$-connectivity results for \ER random graphs but does need additional work.
\end{remark}

\subsection{Proof of Main theorems - Theorems \ref{thm:main}, \ref{thm:hitting} and \ref{thm:main1}}
\label{s:proofthmmain1}

\begin{proof}(Proof of Theorem \ref{thm:main})
The proof follows from Theorems \ref{thm:isolatedprocess},  \ref{thm:bknkapprox} and Slutsky's lemma.  
\end{proof}

\begin{proof}(Proof of Theorem \ref{thm:hitting})
Observe that
\begin{align*}
\sP(T_{n,k} \neq T'_{n,k}) & \leq \sP(T_{n,k} \leq t_c) + \sP(T'_{n,k} \leq t_c) + \sP(T_{n,k} \neq T'_{n,k}, T_{n,k}, T'_{n,k} \geq t_c) \\
& = \sP(T_{n,k} \leq t_c) + \sP(T'_{n,k} \leq t_c) + \sP\Big( \bigcup_{t_c \leq t \leq 1} \{ \beta_k(X(n,t)) \neq N_k(X(n,t)) \} \Big),
\end{align*}
where the last inequality follows because if $T_{n,k} \neq T'_{n,k}$ when both are larger than $t_c$ then there exists a $t \geq t_c$ such that $\beta_k(X(n,t)) > 0 = N_k(X(n,t))$ or $\beta_k(X(n,t)) = 0 < N_k(X(n,t))$.  Now letting $n \to \infty$ and using Theorems \ref{thm:bknkapprox}, \ref{thm:main} and \ref{thm:isolatedprocess}, we obtain that
$$\lim_{n \to \infty} \sP(T_{n,k} \neq T'_{n,k})  \leq 2e^{-\mu(k,c)}.$$
Now the proof is complete by letting $c \to -\infty$ and noting that $\mu(k,c) \to \infty$. 
\end{proof}
It is possible to extend the above argument to prove the following: For $m \geq 0$, let $T_{n,k}(m) = \inf \{t: \beta_k(X(n,s)) \leq m, \, \forall s \geq t \}$ and similarly define $T'_{n,k}(m)$ with respect to $N_k(X(n,t))$.  Then we have that
$$ \lim_{n \to \infty} \sP(T_{n,k}(m) \neq T'_{n,k}(m)) = 0.$$

\begin{proof}(Proof of Theorem \ref{thm:main1})
Again,  set $t_c = t_c(1,n)$.  Using Theorem \ref{thm:zuk},  we derive that
\begin{align*}
&\sP\big(\bigcup_{t \geq t_c} \{\mbox{$\Pi_1(X(n,t))$ does not have property $(T)$}\}\big) 
\leq \sP\Big( \bigcup_{t \geq t_c} \{\mbox{$X(n,t)$ has an isolated 2-face}\} \\ 
& \quad \quad \quad \quad \bigcup \cup_{i=1}^n \{ \mbox{$\lk_i(X(t))$ is not connected} \} \bigcup \cup_{i=1}^n \{\lambda_2(\lk_i(X(t))) < \frac{1}{2}\} \Big)
\end{align*}
By an union bound, the above can be split into a sum of three terms, where the first can be bounded above by 
$ 1 - e^{-\mu(1,c)}$, which follows from Theorem \ref{thm:isolatedprocess}. In STEPS 2 and 3 of the proof of Theorem \ref{thm:bknkapprox}, we respectively show that the probability that for some $t\geq t_c(k,n)$, the link of a $(k-1)$-face in $X(n,t)$ is not connected, or has spectral gap $\lambda_2$ less than $\frac{k}{k+1}$, is asymptotically zero. Applying these bounds for $k=1$, we have that the second and third terms in above vanish.
\end{proof}

\subsection{A probabilistic lemma and proof of Lemma \ref{lem:qalpham}}
\label{s:problemmas}

Recall the quantities $R_{k-1}(t)$ and $R_{k-1}^*(t)$ from \eqref{e:rk-1mt} and \eqref{e:rk-1m*t} respectively.   Here,  we show that $\sE[R_{k-1}^*(t)]$ goes to 0.

\begin{lemma}\label{l:rk*tc}
$\lim_{n \to \infty}\mathbb{E}[R^*_{k-1}(t_c)] = 0$.
\end{lemma}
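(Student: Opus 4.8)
The statement refers to $R^{*}_{k-1}(t_c)=R^{*}_{k-1,m}(t_c)$ of \eqref{e:rk-1m*t} for an arbitrary but fixed $m\ge 1$, and the bound below will hold for every such $m$. The plan is to upper bound $\mathbb{E}[R^{*}_{k-1,m}(t_c)]$ by the expected number of $k$-element subsets of $[n]$ whose vertices have at most $m$ common neighbours in $G(n,t_c)$ --- \emph{discarding} the requirement that the subset actually span a clique or that this happen at time $t_c$ --- and then to observe that this expectation is super-polynomially small because the expected number of common neighbours, $(n-k)t_c^{k}$, grows like $n^{1/(k+1)}(\log n)^{k/(k+1)}$. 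Concretely, I would first use the $1/k!$ normalisation and exchangeability to write $\mathbb{E}[R^{*}_{k-1,m}(t_c)]=\binom{n}{k}\,\sP(E_\tau)$, where, taking $\tau=\{1,\dots,k\}$, $E_\tau$ is the event that for some $s\ge t_c$ the set $\tau$ is a $(k-1)$-face of $X(n,s)$ contained in at most $m$ of its $k$-faces.

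Next I would re-express $E_\tau$ through the edge weights, as in the proof of Proposition \ref{prop:van_isol_faces}. Put $s_\tau:=\max_{1\le l<l'\le k}U(l,l')$, the time at which $\tau$ first becomes a clique, and for $j\in\{k+1,\dots,n\}$ put $W_j:=\max_{1\le l\le k}U(j,l)$, so that $j$ is a common neighbour of the vertices of $\tau$ in $G(n,s)$ precisely when $W_j\le s$; the $W_j$ are i.i.d.\ with $\sP(W_j\le s)=s^{k}$. At time $s$ the set $\tau$ is a $(k-1)$-face iff $s\ge s_\tau$, and it is then a subface of exactly $\#\{j>k:W_j\le s\}$ of the $k$-faces of $X(n,s)$, a quantity non-decreasing in $s$. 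Hence $E_\tau$ occurs iff $\#\{j>k:W_j\le \max(t_c,s_\tau)\}\le m$, and since $\max(t_c,s_\tau)\ge t_c$ this forces $\#\{j>k:W_j\le t_c\}\le m$. Therefore
\[
\sP(E_\tau)\;\le\;\sP\big(\mathrm{Bin}(n-k,\,t_c^{k})\le m\big).
\]

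Finally I would estimate the right-hand side. From $t_c^{k+1}=\big((\tfrac{k}{2}+1)\log n+\tfrac{k}{2}\log\log n+c\big)/n$ one gets $(n-k)t_c^{k}\sim(\tfrac{k}{2}+1)^{k/(k+1)}\,n^{1/(k+1)}(\log n)^{k/(k+1)}\to\infty$, so for all large $n$ we have $(n-k)t_c^{k}\ge 2m$ and a standard (multiplicative Chernoff / Bernstein \cite[Theorem 2.8.4]{HDP}) lower-tail bound gives $\sP(\mathrm{Bin}(n-k,t_c^{k})\le m)\le\exp(-\tfrac{1}{8}(n-k)t_c^{k})$. Combining,
\[
\mathbb{E}[R^{*}_{k-1,m}(t_c)]\;\le\;\binom{n}{k}\exp\!\big(-\tfrac{1}{8}(n-k)t_c^{k}\big)\;\le\;\exp\!\big(k\log n-\tfrac{1}{8}(n-k)t_c^{k}\big)\;\longrightarrow\;0,
\]
since $(n-k)t_c^{k}$ dominates $\log n$.

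There is no genuine obstacle here; the only point needing a little care is the reduction in the second paragraph --- checking that $E_\tau$ depends on the weights only through $s_\tau$ and the $W_j$'s, that the number of $k$-faces lying above $\tau$ is monotone in $s$, and hence that $E_\tau$ can be weakened to a statement about common neighbours at the single time $t_c$. Everything after that is a routine large-deviation estimate. Note in passing that we never use that $\tau$ is actually a clique: throwing away that constraint is precisely what makes the computation short.
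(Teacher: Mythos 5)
Your proof is correct, but it follows a genuinely different route than the paper's. The paper decomposes $R^*_{k-1}(t_c) = R_{k-1}(t_c) + \hat{R}_{k-1}(t_c)$ (mirroring the split $N^*_k = N_k + \hat{N}_k$ used in Proposition~\ref{prop:van_isol_faces}), invokes the result $\mathbb{E}[R_{k-1}(t_c)] \to 0$ from \cite[Section 7.2]{Fowler2019}, and then bounds $\mathbb{E}[\hat{R}_{k-1}(t_c)]$ by an explicit integral over the last edge weight, much as in the proof of \eqref{e:cvgnknk*}. You instead bound $\mathbb{E}[R^*_{k-1,m}(t_c)]$ directly: by the monotonicity of the common-neighbour count $\#\{j>k : W_j \le s\}$ in $s$, the persistent event $E_\tau$ forces $\#\{j>k : W_j \le t_c\} \le m$, so $\sP(E_\tau) \le \sP(\mathrm{Bin}(n-k,t_c^k)\le m)$, and a Chernoff lower-tail bound kills the $\binom{n}{k}$ prefactor since $(n-k)t_c^k \gg \log n$. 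Your reduction is valid (including the observation that one can simply discard the constraint that $\tau$ be a clique, since the binomial estimate does not depend on $s_\tau$), and it buys self-containedness: no appeal to \cite{Fowler2019} is needed and the integral computation is replaced by a one-line large-deviation bound. The trade-off is minor and cosmetic; the paper's route parallels the structure already set up in Proposition~\ref{prop:van_isol_faces}, so the authors presumably preferred the uniform presentation, but your argument is shorter and arguably cleaner.
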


\begin{proof}
Define $\hat{R}_{k-1}(t) := R^*_{k-1}(t) - R_{k-1}(t)$.  It was shown in \cite[Section 7.2]{Fowler2019} that \\ $\lim_{n \to \infty}\mathbb{E}[R_{k-1}(t_c)] = 0$ and so to complete our proof,  we will show convergence of $\sE[\hat{R}_{k-1}(t_c)]$ by computations similar to that in Proposition \ref{prop:van_isol_faces}.  
\begin{align*}
\mathbb{E}[\hat{R}_{k-1}(t_c)] & \leq \sum_{l=0}^m \mathbb{E}\Big[\frac{1}{k!}\sum_{i_1,\dots,i_k}^{\neq} \1[\mbox{$i_1,\ldots,i_k$ do not form a $k$-clique in $G(n,t_c)$}]\\ & \times \1 [\mbox{$i_1,\ldots,i_{k}$ form a $k$-clique in $G(n,s)$ which is a subclique}\\
&\mbox{ of exactly $l$ $(k+1)$-cliques in $G(n,s)$ for some $s > t_c$}] \Big]\\
& =  \sum_{l=0}^m \int_{t_c}^1 \binom{n}{k} x^{\binom{n}{k} - 1} \binom{n-k}{l}x^{kl}(1-x^k)^{n-k-l}dx\\
& \leq \sum_{l=0}^m \int_{t_c}^1 n^{k+l} x^{\binom{n}{k} - 1} x^{kl}(1-x^k)^{n-k-l}dx\\
& \leq \sum_{l=0}^m  \int_{t_c}^1 n^{k+l} x^{\binom{n}{k} - 1} x^{kl}e^{-x^k(n-k-l)}dx\\
& \leq \sum_{l=0}^m \int_{t_c}^1 n^{k+l} x^{\binom{n}{k} - 1} x^{kl}e^{-(t_c)^k(n-k-l)}dx\\
 &\leq C\sum_{l=0}^m \int_{t_c}^1 n^{k+l} x^{n^k-1} x^{kl}e^{-((\frac{k}{2}+1)\log n)^{\frac{k}{k+1}}n^{\frac{1}{k+1}}}dx\\
& \leq (m+1)n^{k+m} e^{-((\frac{k}{2}+1)\log n)^{\frac{k}{k+1}}n^{\frac{1}{k+1}}}
\end{align*}
As the exponential term will dominate, the above will limit to 0 as $n \to \infty$.
\end{proof}
The above computation is simpler than that in Proposition \ref{prop:van_isol_faces} since we are considering $(k-1)$-faces here. 
\begin{proof}(Proof of Lemma \ref{lem:qalpham})
As $q(\alpha,m) = \frac{(\alpha+1)\log m}{m}$, which is decreasing in $m$ for $m > 1$, and $\umu(t)$ increases with $t$, it is enough to show that $t_c > q(\alpha, \umu(t_c))$. Now, 
\begin{align*}
&t_c - q(\alpha, \umu(t_c))
= t_c - \frac{(\alpha+1)\log (n t_c^k - (n t_c^k)^{\frac{3}{5}})}{n t_c^k - (n t_c^k)^{\frac{3}{5}}} 
\geq t_c - \frac{(\alpha+1)\log (n t_c^k)}{n t_c^k - (n t_c^k)^{\frac{3}{5}}}\\
=&\Big(\frac{1}{nt_c^k - (nt_c^k)^{3/5}}\Big) (nt_c^{k+1} - n^{3/5}t_c^{\frac{3k}{5}+1} - (\alpha + 1)\log(nt_c^k))
\end{align*}

Note that the denominator in the above is positive. Plugging in the value of $t_c$, we get the numerator as

\begin{align*}
&\Big((\frac{k}{2}+1)\log n + \frac{k}{2}\log \log n + c\Big) - n^{\frac{2}{5(k+1)}}\Big(\frac{k}{2}+1)\log n + \frac{k}{2}\log \log n + c\Big)^{\frac{3k+5}{5(k+1)}} - \\
 &\frac{\alpha + 1}{k+1}\log n - \frac{k(\alpha + 1)}{k+1}\log\Big((\frac{k}{2}+1)\log n + \frac{k}{2}\log \log n + c\Big)\\
= &(\frac{k}{2}+1-\frac{\alpha+1}{k+1})\log n +O(\log \log n)
\end{align*}

Plugging in $\alpha = \frac{k(k+3)}{2} - \rho$ with $\rho$ small, we get the coefficient of $\log n$ in the above is positive. This means for big enough $n$, we have the required inequality.
\end{proof}

\section*{Acknowledgements}
DY's research was funded by CPDA from the Indian Statistical Institute, DST-INSPIRE Faculty award and  SERB-MATRICS grant.  AR was partially supported by NSF grant DMS-1906414. This project originally started as part of AR's M.Math project in 2017 at Indian Statistical Institute, Bangalore. AR would also like to thank ISI Bangalore for giving the opportunity to visit and work on this problem.

\bibliography{references}
\bibliographystyle{plainnat}

 %use APA style from Google Scholar.  

\end{document}